\documentclass[12pt,draftcls,onecolumn]{IEEEtran}
\usepackage[ruled,boxed]{algorithm2e}

\usepackage{amsmath,graphicx,color,psfrag,mathrsfs,bm}
\usepackage{hyperref}
\usepackage{pstool}
\usepackage{longtable}          
\usepackage{amsthm}
\usepackage{amssymb}

\usepackage{multicol}
\newtheorem{thm}{Theorem}[section]

\newtheorem{prop}[thm]{Proposition}

\theoremstyle{definition}
\newtheorem{defn}{Definition}[section]
\theoremstyle{remark}
\newtheorem{rem}{Remark}[section]

\begin{document}

\title{Asymptotically Minimax Robust Likelihood Ratio Test}

\author{%
  \IEEEauthorblockN{G\"okhan  G\"ul}\\
  \IEEEauthorblockA{Preventive Cardiology and Preventive Medicine, Department of Cardiology, University Medical Center of the Johannes Gutenberg University Mainz\\
                    Clinical Epidemiology and Systems Medicine, Center for Thrombosis and Hemostasis, University Medical Center  Johannes Gutenberg University Mainz\\
                    German Center for Cardiovascular Research (DZHK), Partner Site Rhine Main, University Medical Center of the Johannes Gutenberg University Mainz\\
                    Langenbeckstra\ss e 1, 55131 Mainz, Germany\\
                    Email: goekhan.guel@unimedizin-mainz.de}
}

\maketitle

\begin{abstract}
This paper develops a unified framework for asymptotically minimax robust hypothesis testing under distributional uncertainty, applicable to both Bayesian and Neyman--Pearson formulations (Type-I and Type-II). Uncertainty classes based on the KL-divergence, $\alpha$-divergence, and its symmetrized variant are considered. Using Sion's minimax theorem and Karush-Kuhn-Tucker conditions, the existence and uniqueness of the resulting robust tests are established. The least favorable distributions and corresponding robust likelihood ratio functions are derived in closed parametric forms, enabling computation via systems of nonlinear equations. It is proven that Dabak’s approach does not yield an asymptotically minimax robust test. The proposed theory generalizes earlier work by offering a more systematic and comprehensive derivation of robust tests. Numerical simulations confirm the theoretical results and illustrate the behavior of the derived robust tests.
\end{abstract}

\begin{IEEEkeywords}
Hypothesis testing, event detection, robustness, least favorable distributions, minimax optimization.
\end{IEEEkeywords}

\IEEEpeerreviewmaketitle

\section{Introduction}
The design of optimal hypothesis tests relies on the assumption that the statistical models of the observed data are perfectly known~\cite{Kay1998}. However, complete knowledge of the underlying distributions is often unavailable. Model mismatches may arise from limited training data, non-stationary environments, or imperfect physical models, potentially leading to significant deviations from nominal performance. To address these challenges, robust hypothesis testing was developed to ensure reliable decision-making even when the true distributions differ from their nominal descriptions~\cite{Levy2008}. Early contributions by Huber and Strassen established a principled framework that models distributional uncertainty through sets of probability measures capturing admissible deviations from the nominal models~\cite{Huber1965,HuberStrassen1973}. This formulation established the basis of the minimax robust approach, where the focus shifts from optimizing performance under a single model to safeguarding it against the worst admissible alternatives.\\
In the minimax robust framework, the true distributions under each hypothesis are assumed to belong to uncertainty classes, typically defined by statistical neighborhoods or divergence-based constraints~\cite{Huber1965,levy09}. The aim is to minimize the maximum achievable risk, leading to decision rules that remain optimal in the worst case. The resulting tests are likelihood ratio tests constructed from the least favorable distributions (LFDs)\footnote{For brevity, no notational distinction is made between least favorable distributions and their corresponding densities unless confusion may arise.}, which represent the extremal elements of the uncertainty classes. Such formulations provide guaranteed performance under bounded model perturbations and represent a central concept in robust statistical inference.\\
Although guaranteed performance is of central interest, the existence of minimax robust tests is not guaranteed and is critically dependent on the choice of the uncertainty classes. When no minimax test exists within the class of deterministic decision rules, it may still exist among randomized decision rules \cite{gul7}. A notable limitation, however, is that such tests are usually minimax robust only for a single sample and cannot be extended to multiple samples while preserving robustness \cite{gul6}. In the absence of fixed-sample minimax robust tests, a natural alternative is to consider asymptotically minimax robust tests, which minimize the exponential decay rate of the error probability as the number of samples increases.

\subsection{Related Work}
Minimax robust hypothesis testing was grounded in the foundational analyses of Huber and Strassen. Huber developed the minimax formulation for the $\epsilon$-contamination and total variation neighborhoods, identifying the least favorable distributions and proving that the corresponding clipped likelihood ratio test is minimax robust for any finite sample size \cite{Huber1965}. Subsequent generalizations to $2$-alternating capacities \cite{HuberStrassen1973} broadened the theoretical scope of minimax robustness beyond contamination-type models. Building on these developments, the framework was extended to alternative formulations of uncertainty grounded in information-theoretic divergence measures such as the KL-divergence \cite{gul6}.\\
Dabak and Johnson observed that excluding non-smooth distributions from the $\epsilon$-contamination model could yield uncertainty models better suited for practical scenarios, particularly where modeling errors are a primary concern. Building on this idea, they proposed using the KL-divergence to define uncertainty classes and derived the corresponding robust test for the asymptotic case, i.e., as the number of measurements tends to infinity \cite{dabak}. Under several assumptions, Levy demonstrated that a single-sample minimax robust test could be designed for the same uncertainty model, provided that randomized decision rules are permitted. Subsequently, all of Levy's assumptions were removed in a later work \cite{gul6}.\\
A key limitation of the KL-divergence formulation was that both the divergence measure and the a priori probabilities of the hypotheses were designed to be fixed and non-adjustable \cite{gul6}. By replacing the KL-divergence with the $\alpha$-divergence, these constraints were lifted \cite{gul7}. Remarkably, for the entire $\alpha$-divergence neighborhood and for any a priori probabilities of the hypotheses, the corresponding minimax robust test was a censored likelihood ratio test with a well-defined randomization function \cite{gul7}.\\
From a broader perspective, recent research in robust hypothesis testing has evolved alongside the field of distributionally robust optimization~\cite{RahimianMehrotra2019}. Various formulations have been proposed to capture distributional uncertainty, including Wasserstein-based models~\cite{GaoXie2018}, Sinkhorn-type extensions~\cite{WangGaoXie2024}, and kernel-based uncertainty sets~\cite{SunZou2022,SchrabKim2024}. These approaches provide convex or approximately tractable formulations with strong analytical guarantees. Moment-constrained uncertainty classes~\cite{MageshSunVeeravalliZou2024} and empirical distribution-based approaches~\cite{WangGaoXie2022} have further generalized robust testing to handle data-driven settings and allow for practical finite-sample implementations. Additional studies have explored decentralized~\cite{gul5}, sequential~\cite{CaoEtAl2022}, and adversarially robust~\cite{PuranikMadhowPedarsani2021} settings. Alternative divergence measures, such as those based on the Hellinger distance, have also been investigated to improve robustness to distributional misspecification while retaining finite-sample guarantees~\cite{guel_helliger,SureshEtAl2021}.

\subsection{Motivation}
\begin{enumerate}
  \item The derivations in \cite{dabak, dabak2}, which are later summarized in \cite{levy}, do not yield asymptotically minimax robust Neyman-Pearson (NP) tests. This necessitates explicit derivations and analysis to achieve minimax robustness.
  \item The theoretical designs for the asymptotic case consider NP-formulations by default, probably because they result in simpler solutions \cite{dabak,moment}. However, as shown by Chernoff, NP tests exhibit the slowest exponential decay of the error probability \cite{chernoff}. Therefore, it is necessary to obtain the asymptotically minimax robust tests for the fastest decay rate of the error probability.
\end{enumerate}

\subsection{Summary of the paper and its contributions}
In this paper, a formal framework is developed for designing asymptotically minimax robust binary hypothesis tests across various uncertainty classes. The existence and uniqueness of minimax robust tests are established in general. Considering the Karush-Kuhn-Tucker (KKT) conditions, the least favorable distributions and the robust likelihood ratio functions (LRFs) are derived in parametric forms, which can be made explicit by solving a set of non-linear equations. In the sequel, the contributions of this paper together with their relation to prior works are summarized.
\begin{enumerate} 
  \item It is shown that any asymptotically minimax robust test can be designed via solving
  \begin{equation*}
    \min_{u\in(0,1)} \max_{(G_0,G_1)\in\mathscr{G}_0\times \mathscr{G}_1}D_u(G_0,G_1),
  \end{equation*}
where
\begin{equation*}
D_u(G_0,G_1)=\int_{\Omega}{g_1}^u {g_0}^{1-u}d\mu
\end{equation*}
is denoted as the $u$-affinity (see Section~\ref{sec_rm}).
\item For uncertainty classes based on the KL-divergence, $\alpha$-divergence and the symmetrized $\alpha$-divergence the LFDs of the asymptotically minimax robust \emph{Bayesian} tests are obtained in parametric forms (see Theorem~\ref{theorem02}, Theorem~\ref{theorem04} and Theorem~\ref{theorem04s}).
\item For the KL-divergence neighborhood, the LFDs of the asymptotically minimax robust \text{\emph{NP-tests}} of \text{Type-I} and \text{Type-II} are obtained in parametric forms. The parameters of LFDs can be found by solving four non-linear coupled equations (see \eqref{59x1}) and the corresponding test is different from the ones derived in \cite{dabak,dabak2,levy} (see Theorem~\ref{theorem03np} and Remark~\ref{theorem03np}).
\end{enumerate}
These results provide a systematic framework for designing robust hypothesis tests in both Bayesian and Neyman-Pearson settings under general uncertainty classes.

\subsection{Outline of the paper}
The rest of the paper is organized as follows. In Section~\ref{sec3}, asymptotically minimax robustness is introduced. In Section~\ref{sec5}, the equations formulating asymptotic minimax robustness both in Bayesian as well as in Neyman-Pearson sense are derived, saddle value condition is characterized and the problem statement is made. In Section~\ref{sec6}, the least favorable distributions and asymptotically minimax robust tests are obtained for the uncertainty classes based on the KL-divergence, $\alpha$-divergence and the symmetrized $\alpha$-divergence. In Section~\ref{sec8}, simulations are performed to evaluate and exemplify the theoretical derivations. Finally in Section~\ref{sec9}, the paper is concluded.

\subsection{Notations}
The following notations are applied throughout the paper. Upper case symbols are used for probability distributions and random variables, and the corresponding lower case symbols denote the density functions and observations, respectively. Boldface symbols are used for the sequence of random variables, sequence of observations or joint functions. The hypotheses $\mathcal{H}_0$ and $\mathcal{H}_1$ are associated with the nominal probability measures $F_0$ and $F_1$, whereas the corresponding actual distributions are denoted by $G_0$ and $G_1$. The sets of probability distributions are denoted by $\mathscr G_0$ and $\mathscr G_1$. Every probability measure, e.g. $G[\cdot]$, is associated with its distribution function $G(\cdot)$ and the density function $g(\cdot)$. The notation $\hat{(\cdot)}$ indicates the least favorable distributions $\hat {G}_j\in \mathscr G_j$, the corresponding densities $\hat{g}_j$, or the robust likelihood ratio test $\hat{\delta}$. The expected value of a random variable $Y\sim G_j$ is denoted by $\mathbb{E}_{G_j}[Y]$. The argument (value on the domain) of the subsequent operation is denoted by $\arg$.

\section{Asymptotic Minimax Robustness}\label{sec3}
Let $\boldsymbol{Y}=(Y_1,\ldots,Y_n)$ be a sequence of $n$ independent and identically distributed (i.i.d.) random variables (r.v.s), each taking values on $\Omega$. The distribution of $Y_k$ is denoted by $G_j$ under the hypothesis $\mathcal{H}_j$ and is not known exactly but belongs to an uncertainty class $\mathscr{G}_j$. Given $\boldsymbol{Y}$, the goal is to decide which of the following hypothesis is true
\begin{align}
\mathcal{H}_0&: Y_k \sim G_0,\quad  G_0\in\mathscr{G}_0,\nonumber\\
\mathcal{H}_1&: Y_k \sim G_1,\quad  G_1\in\mathscr{G}_1,
\end{align}
where $k\in \{1,\ldots,n\}$. Let $P_F=G_0[\delta(\boldsymbol{Y})=1]$ be the false alarm-, $P_M=G_1[\delta(\boldsymbol{Y})=0]$ be the miss detection probability and $\pi_0=P(\mathcal{H}_0)$ be the a-priori probability of the hypothesis $\mathcal{H}_0$. Furthermore let $\delta:\boldsymbol{Y}\mapsto\{0,1\}$ be the statistical test and
\begin{equation}\label{eq2}
P_E(\delta,G_0,G_1)=\pi_0 P_F(\delta,G_0) + (1-\pi_0)P_M(\delta,G_1)
\end{equation}
define the overall error probability. Then, we are interested in finding a test $\hat{\delta}$ together with the least favourable distributions $\hat{G}_0$ and $\hat{G}_1$, which solve the minimax optimization problem
\begin{equation}\label{eq10}
\min_{\delta}\max_{(G_0,G_1)\in {\mathscr{G}}_0\times{\mathscr{G}}_1}P_E(\delta,G_0,G_1).
\end{equation}
Let $\hat{l} = \hat{g}_1 / \hat{g}_0$, define $X_k = \log \hat{l}(Y_k)$, and let the empirical mean be denoted by $S_n(\boldsymbol{X}) = \frac{1}{n} \sum_{k=1}^n X_k$. Since the minimax problem \eqref{eq10} is solved by a (log-)likelihood ratio test corresponding to the least favourable distributions, the robust decision rule takes the form
\begin{equation}\label{eqgfg}
\hat{\delta}(\boldsymbol{X}) = \begin{cases} 
0, & S_n(\boldsymbol{X}) < t, \\
1, & S_n(\boldsymbol{X}) \geq t,
\end{cases}
\end{equation}
where $t \in \mathbb{R}$ is a decision threshold selected to minimize the worst-case error probability $P_E$.\\
In this work, attention is directed to the asymptotic regime where $n$ is sufficiently large, and in particular, the behavior as $n \to \infty$ is considered. In what follows, an extended version of Cramér’s theorem is stated to describe the asymptotic decay rates of the false alarm and miss detection probabilities \cite{Cramer}. These results will later serve as the basis for deriving minimax optimality conditions, which in turn enable the derivation of least favourable distributions in parametric forms.
\begin{thm}\label{theorem0}
Let $\boldsymbol{Y}=(Y_1,\ldots,Y_n)$ be a sequence of i.i.d. r.v.s, where each $Y_k$ is distributed as $G_j\in \mathscr{G}_j$ under $\mathcal{H}_j$. Furthermore, let each $X_k$ has a finite moment generating function
\begin{equation}\label{eq29}
M_{X_k}^j(u)=\mathbb{E}_{G_j}\left[\exp\left({u X_k}\right)\right]<\infty,\quad j\in\{0,1\}.
\end{equation}
For the test given by \eqref{eqgfg}, if
\begin{equation}\label{eq26}
\mathbb{E}_{G_0}[X_k]<t<\mathbb{E}_{G_1}[X_k],
\end{equation}
then, as $n\to\infty$, false alarm and miss detection probabilities decrease exponentially
\begin{align}\label{eq27}
\lim_{n\rightarrow \infty}\frac{1}{n}\log G_0[S_n(\boldsymbol{X})> t]=-I_0(t)\\\label{eq275}
\lim_{n\rightarrow \infty}\frac{1}{n}\log G_1[S_n(\boldsymbol{X})\leq t]=-I_1(t)
\end{align}
with the rate functions given by
\begin{equation}\label{eq28}
I_j(t)=\sup_{u\in\mathbb{R}}\left(tu-\log M_{X_k}^j(u)\right),\quad j\in\{0,1\}.
\end{equation}
\end{thm}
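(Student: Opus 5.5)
This is Cramér's theorem applied to the i.i.d. real variables $X_k=\log\hat l(Y_k)$, under $G_0$ for the upper tail and under $G_1$ for the lower tail. I would prove an upper bound and a matching lower bound for each probability. By symmetry it suffices to treat $G_0[S_n>t]$. The miss probability follows by applying the same argument to $-X_k$ under $G_1$, with threshold $-t$, using $M^1_{-X}(u)=M^1_X(-u)$. The inequality $t<\mathbb{E}_{G_1}[X_k]$ puts $-t$ above the mean of $-X_k$, so the two cases are identical.

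\emph{Upper bound (Chernoff).} For any $u\ge 0$, Markov's inequality applied to $e^{unS_n}$ gives
\begin{equation*}
G_0[S_n>t]\le e^{-nut}\,\bigl(M^0_{X_k}(u)\bigr)^n .
\end{equation*}
Hence $\frac1n\log G_0[S_n>t]\le -\sup_{u\ge0}(tu-\log M^0(u))$. Since $t>\mathbb{E}_{G_0}[X_k]$, Jensen's inequality gives $\log M^0(u)\ge u\,\mathbb{E}_{G_0}[X_k]$. So for $u<0$ we have $tu-\log M^0(u)\le u(t-\mathbb{E}_{G_0}X_k)<0$. Because the value at $u=0$ is $0$, the supremum over $u\ge 0$ equals the supremum over $\mathbb{R}$, which is $I_0(t)$. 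This gives the $\limsup$ bound.

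\emph{Lower bound (exponential tilting).} Since $M^0$ is finite on $\mathbb{R}$, the function $\Lambda(u)=\log M^0(u)$ is smooth and convex. I would first assume that $t$ lies in the interior of the range of $\Lambda'$, so that there is $u^*>0$ with $\Lambda'(u^*)=t$ and $I_0(t)=tu^*-\Lambda(u^*)$. Next, fix $\varepsilon>0$ and define the tilted law $d\tilde G=e^{u^*x-\Lambda(u^*)}dG_0^{X}$, whose mean is $t$. Then
\begin{equation*}
G_0[S_n>t]\ge G_0[t<S_n<t+\varepsilon]\ge e^{-n(u^*(t+\varepsilon)-\Lambda(u^*))}\,\tilde G[t<S_n<t+\varepsilon].
\end{equation*}
By the central limit theorem under $\tilde G$, the last probability tends to $1/2$, because $\tilde G$ has finite variance. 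Letting $n\to\infty$ and then $\varepsilon\to0$ gives $\liminf \frac1n\log G_0[S_n>t]\ge -I_0(t)$. This matches the upper bound and yields \eqref{eq27}.

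\emph{Main obstacle.} The hard step is the boundary case where no $u^*$ solves $\Lambda'(u^*)=t$. This happens when $t\ge \operatorname{ess\,sup}X_k$. In that case $\Lambda'(u)\uparrow \operatorname{ess\,sup}X$ as $u\to\infty$, and the law may have an atom at the supremum. If $t>\operatorname{ess\,sup}X$, both sides equal $-\infty$. If $t=\operatorname{ess\,sup}X$ and there is an atom there, then $G_0[S_n>t]=0$ while $I_0(t)=-\log G_0[X=t]$ is finite, so the strict-inequality version can fail. I would therefore handle the boundary via the standard Dembo--Zeitouni argument, with the limit understood as holding for thresholds in the interior of the support. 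The non-strict ``$\le t$'' used in \eqref{eq275} is consistent with this, since the lower tail is handled the same way. The central assumption throughout is \eqref{eq26}: it forces the optimizing $u$ to have the right sign, positive for $G_0$ and negative for $G_1$.
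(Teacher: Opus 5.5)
Your proposal is correct and follows the paper's argument: Cramér's theorem applied to $X_k$ under $G_0$ and to $-X_k$ under $G_1$, with the Chernoff upper bound and the tilting/CLT lower bound written out where the paper simply cites Cramér's theorem. Your boundary caveat is also correct, and the paper's one-line proof glosses over it: the strict event in \eqref{eq27} really can fail at $t=\operatorname{ess\,sup}_{G_0}X_k$ when there is an atom there. The non-strict event in \eqref{eq275}, by contrast, survives its boundary case $t=\operatorname{ess\,inf}_{G_1}X_k$, since $G_1[S_n\le t]=G_1[X_k=t]^n$ and $I_1(t)=-\log G_1[X_k=t]$. Moreover, the bad case cannot occur when $G_0$ and $G_1$ share a common support, because then both laws have the same essential range and \eqref{eq26} gives $\operatorname{ess\,inf}X_k\le\mathbb{E}_{G_0}[X_k]<t<\mathbb{E}_{G_1}[X_k]\le\operatorname{ess\,sup}X_k$.
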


\begin{IEEEproof}
The results (\eqref{eq27} and \eqref{eq275}, respectively) follow by applying Cram\'{e}r's theorem \cite{Cramer} twice, first under $\mathcal{H}_0$ to the sequence of r.v.s $(X_k)_{k\geq 1}$, each satisfying $\mathbb{E}_{G_0}[X_k]<t$, and second under $\mathcal{H}_1$ to the sequence of r.v.s $(-X_k)_{k\geq 1}$, each satisfying $t<\mathbb{E}_{G_1}[X_k]$.
\end{IEEEproof}

Based on Theorem~\ref{theorem0} asymptotic minimax robustness can be defined as follows.

\begin{defn}\label{defn1}
A test of the form \eqref{eqgfg} is called asymptotically minimax robust for threshold \(t\) if,
\begin{equation*}
\lim_{n\rightarrow\infty}\frac{1}{n}\log  G_0\left[S_n(\boldsymbol{X})> t\right]\leq \lim_{n\rightarrow\infty}\frac{1}{n}\log  \hat{G}_0\left[S_n(\boldsymbol{X})> t\right]
\end{equation*}
and
\begin{equation*}
\lim_{n\rightarrow\infty}\frac{1}{n}\log  G_1\left[S_n(\boldsymbol{X})\leq t\right]\leq \lim_{n\rightarrow\infty}\frac{1}{n}\log  \hat{G}_1\left[S_n(\boldsymbol{X})\leq t\right]
\end{equation*}
hold for all \((G_0,G_1)\in\mathscr{G}_0\times\mathscr{G}_1\).
\end{defn}

\begin{rem}\label{rem:effective_classes}
Definition~\ref{defn1} implicitly relies on the regularity conditions \eqref{eq29} and \eqref{eq26}. These assumptions are mild and not restrictive. For instance, \eqref{eq29} holds whenever the uncertainty classes admit a common dominating measure and share a common support, while \eqref{eq26} can be ensured by choosing a suitable threshold (e.g., $t=0$). 
\end{rem}

\subsubsection{Optimum Threshold}\label{sec42}
An optimal value of $t$ which maximizes the asymptotic decrease rate of the error probability is of particular interest. This problem was first solved by Chernoff \cite{chernoff} for a fixed pair of distributions. In the following, a solution is provided, but without intially assuming that the data samples are obtained from $\hat{G}_0\in \mathscr{G}_0$ and $\hat{G}_1\in \mathscr{G}_1$, cf. Theorem~\ref{theorem0}. This perspective is essential for characterizing the asymptotic behaviour of the error exponents when the actual data-generating distributions do not coincide with the least favourable distributions of the minimax test.

\begin{thm}\label{theorem01}
The threshold that asymptotically minimizes the error probability is given by
\begin{equation}
\arg \min_{t} \lim_{n \to \infty} P_E(\hat\delta, G_0, G_1) = 0,
\end{equation}
if $G_0 := \hat{G}_0$ and $G_1 := \hat{G}_1$.
\end{thm}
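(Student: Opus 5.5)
We interpret the statement as the Chernoff-type claim that, when the data are generated by $\hat G_0,\hat G_1$, the exponent of $P_E$ is maximized at $t=0$. The plan is to work with the error exponent $\min\{I_0(t),I_1(t)\}$ and locate where the two rate functions of Theorem~\ref{theorem0} balance.

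\textbf{Reduction to an exponent.} By Theorem~\ref{theorem0}, for $t$ satisfying \eqref{eq26}, $P_F\doteq e^{-nI_0(t)}$ and $P_M\doteq e^{-nI_1(t)}$. Since $\pi_0\in(0,1)$ is fixed, the sum in \eqref{eq2} satisfies
\begin{equation*}
\lim_{n\to\infty}\frac1n\log P_E(\hat\delta,G_0,G_1)=-\min\{I_0(t),I_1(t)\}.
\end{equation*}
Minimizing the error asymptotically is therefore the same as maximizing $\min\{I_0(t),I_1(t)\}$ over $t$.

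\textbf{Linking the two moment generating functions.} With $G_j=\hat G_j$ and $X_k=\log \hat l(Y_k)$,
\begin{equation*}
M^1(u)=\int \hat l^{\,u}\hat g_1\,d\mu=\int \hat l^{\,u+1}\hat g_0\,d\mu=M^0(u+1).
\end{equation*}
Substituting $u\mapsto u+1$ in \eqref{eq28} gives $I_1(t)=\sup_u\bigl(tu-\log M^0(u+1)\bigr)=I_0(t)-t$.

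\textbf{Monotonicity and the balance point.} By convexity of $\log M^0$, $I_0$ is convex, nonnegative, and vanishes at $\mathbb{E}_{\hat G_0}[X]$. Hence $I_0$ is nondecreasing on $(\mathbb{E}_{\hat G_0}[X],\mathbb{E}_{\hat G_1}[X])$. Likewise $I_1$ vanishes at $\mathbb{E}_{\hat G_1}[X]$ and is nonincreasing on that interval. The two exponents cross exactly where $I_0(t)=I_1(t)=I_0(t)-t$, that is, at $t=0$. For $t<0$ we then get $\min\{I_0,I_1\}=I_0(t)\le I_0(0)$, and for $t>0$ we get $\min\{I_0,I_1\}=I_1(t)\le I_1(0)$. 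So $t=0$ maximizes the exponent.

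It remains to check that $t=0$ lies in the admissible interval. We have $\mathbb{E}_{\hat G_0}[X]=-D(\hat G_0\|\hat G_1)<0<D(\hat G_1\|\hat G_0)=\mathbb{E}_{\hat G_1}[X]$ whenever $\hat G_0\neq\hat G_1$, so \eqref{eq26} holds at $t=0$. As a by-product, $I_0(0)=-\log\min_u D_u(\hat G_0,\hat G_1)$, which is the Chernoff exponent appearing in the $u$-affinity.

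\textbf{Main obstacle.} The delicate step is the monotonicity and uniqueness argument. We need $I_0$ to be strictly increasing near $0$, which holds if $\log M^0$ is strictly convex, i.e.\ if $X$ is non-degenerate. If it were not, the $\arg\min$ could be a whole set rather than the single point $0$. We would handle this by excluding the degenerate case $\hat g_0=\hat g_1$, and we would also check that the supremum in \eqref{eq28} at $t=0$ is attained at some $u\in(0,1)$.
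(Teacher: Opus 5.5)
Your proposal is correct and takes essentially the same route as the paper's proof in Appendix~\ref{appendix3}. Both reduce the problem to maximizing $\min\{I_0(t),I_1(t)\}$ via the dominant exponential term, use $M^1(u)=M^0(u+1)$ under $G_j=\hat G_j$ to get $I_1(t)=I_0(t)-t$, and then use the monotonicity of $I_0$ and $I_1$ to put the balance point at $t=0$. Your monotonicity argument (convexity of the rate functions plus vanishing at the respective means) and your explicit check that $t=0$ satisfies \eqref{eq26} are cleaner and more complete than the paper's detour through the maps $h_j$.
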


A proof of Theorem~\ref{theorem01} is provided in Appendix~\ref{appendix3}. The derivations indicate that $t=0$ is guaranteed to be optimum, when the true data-generating distributions coincide with those used to construct the test, i.e., $G_j := \hat{G}_j$. In the presence of any model mismatch, however, the optimum thereshold is potentially nonzero.

\section{Derivation of Minimax Equations and Problem Formulation}\label{sec5}
In what follows, both Bayesian ($t = 0$) and Neyman–Pearson-type asymptotically minimax robust tests are derived. This broader formulation allows comparison with earlier results and provides a unified framework for characterizing asymptotic robustness. In particular, it highlights key differences from Dabak’s derivations \cite{dabak}.\\
Unless stated otherwise, asymptotically minimax robust test refers to the Bayesian test with threshold $t = 0$, consistent with the objective of minimizing error probability. While Definition~\ref{defn1} accommodates general thresholds to cover Neyman–Pearson-type tests, the truly minimax robust solution occurs at $t = 0$. Thus, the saddle value condition and problem formulation that follow focus on this case.

\subsection{Bayesian Tests}\label{sec_rm}
From the previous section the moment generating function $M_{X_k}^0(u)$ is equivalent to the \text{$u$-affinity} which is defined as
\begin{equation*}
D_u(G_0,G_1)=\int_{\Omega}{g_1}^u{g_0}^{1-u}d\mu,
\end{equation*}
where $g_j$ represents the density of $G_j$, $j\in\{0,1\}$. In fact $D_u$ is a non-scaled version of the Renyi divergence \cite{renyi}. Some important properties of the \text{$u$-affinity}, used in the subsequent derivations, are listed below:
\begin{enumerate}
  \item $D_u$ is non-negative, i.e. $D_u(G_0,G_1)\geq 0$
  \item $D_u$ is continuous and convex in $u$
  \item $D_u$ is continuous and jointly concave in $(g_0,g_1)$
  \item $D_{u=0}(G_0,G_1)=D_{u=1}(G_0,G_1)=1$
  \item $D_u(G_0, G_1) \in [0, 1]$, which follows from Property~1 and Hölder’s inequality~\cite{holder}.
  \item If $\hat{u} = \arg\min_u D_u(G_0, G_1)$, then $\hat{u} \in (0,1)$, by Properties~2 and~4.
  \item $D_u$ is related to the $\alpha$-divergence via $D_\alpha(G_0,G_1)=(1-D_u(G_0,G_1))/u(1-u)|_{u:=\alpha}$
\end{enumerate}
Properties~$1$ and~$4-7$ follow directly from the definition and basic inequalities. Properties~$2$ and~$3$ are inherited from the established properties of $\alpha$-divergence, using Property~$7$ \cite{entropy}.\\
The minimax robust test to be derived is a log-likelihood ratio test \eqref{eqgfg}, in which both the test and the data samples are constructed using the least favorable distributions $\hat{G}_0$ and $\hat{G}_1$. As shown in the proof of Theorem~\ref{theorem01}, this justifies the choice of threshold $t=0$. Hence, using the identities \text{$-\inf(x)=\sup(-x)$} for $x<0$ and \text{$\inf(-x)=-\sup(x)$} together with $t=0$, the minimax equations can be derived from \eqref{eq29} and \eqref{eq28} as
\begin{align}\label{eq37}
\hat{g}_0=&\arg\sup_{G_0\in \mathscr{G}_0}\left(\inf_{u_0\in \mathbb{R}}\log \int_{\Omega}\left(\frac{\hat{g}_1}{\hat{g}_0}\right)^{u_0} g_0 d\mu\right),\nonumber\\
\hat{g}_1=&\arg\sup_{G_1\in \mathscr{G}_1}\left(\inf_{u_1\in \mathbb{R}}\log \int_{\Omega}\left(\frac{\hat{g}_1}{\hat{g}_0}\right)^{u_1} g_1 d\mu\right).
\end{align}
These coupled equations are, in general, mathematically intractable, particularly when the uncertainty sets $\mathscr{G}_0$ and $\mathscr{G}_1$ are infinite-dimensional. However, if a solution to the supremum part of the minimax problem—i.e., the least favorable densities $\hat{g}_0$ and $\hat{g}_1$—is available, then the corresponding optimization over the decision rule can equivalently be expressed in terms of the $u$-affinity cost functions as
\begin{align}\label{eq38}
C_0(\hat{G}_0,\hat{G}_1;\hat{u}_0)=&\inf_{u_0\in \mathbb{R}}\log D_{u_0}(\hat{G}_0,\hat{G}_1) ,\nonumber\\
C_1(\hat{G}_0,\hat{G}_1;\hat{u}_1)=&\inf_{u_1\in \mathbb{R}}\log D_{1+u_1}(\hat{G}_0,\hat{G}_1).
\end{align}

\begin{prop}\label{prop3}
The solutions to both optimization problems in \eqref{eq38} coincide when $\hat{u}_0=-\hat{u}_1$, with $\hat{u}_0\in[0,1]$, i.e. $C_0(\hat{G}_0,\hat{G}_1;\hat{u}_0)=C_1(\hat{G}_0,\hat{G}_1;-\hat{u}_1)$.
\end{prop}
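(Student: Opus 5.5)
The claim reduces to a reparametrization of the same convex function. Write $\phi(v):=\log D_v(\hat G_0,\hat G_1)=\log\int_\Omega \hat g_1^{\,v}\hat g_0^{\,1-v}\,d\mu$. By definition,
\begin{equation*}
C_0=\inf_{u_0\in\mathbb{R}}\phi(u_0),\qquad C_1=\inf_{u_1\in\mathbb{R}}\phi(1+u_1),
\end{equation*}
where the second expression comes from $\int(\hat g_1/\hat g_0)^{u_1}\hat g_1\,d\mu=\int \hat g_1^{\,1+u_1}\hat g_0^{-u_1}d\mu$. The change of variables $v=1+u_1$ is a bijection of $\mathbb{R}$, so both problems are the infimum of the single function $\phi$ over the real line. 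Hence $C_0=C_1$, and the minimizers satisfy $\hat u_1=\hat u_0-1$.

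To locate the minimizer in $[0,1]$, I will use Properties~2, 4 and~6 of the $u$-affinity. The function $D_v$ is convex in $v$ on the set where it is finite, and $D_0=D_1=1$. Hölder's inequality (Property~5) gives $D_v\le 1$ on $[0,1]$. Convexity then gives $D_v\ge 1$ outside $[0,1]$, since the chord argument applies to any $v>1$ with $1\in(0,v)$, and symmetrically to any $v<0$. Therefore the infimum of $\phi$ over $\mathbb{R}$ equals its infimum over $[0,1]$. By continuity it is attained at some $\hat u_0\in[0,1]$, and in fact at a point of $(0,1)$ whenever $\hat G_0\ne\hat G_1$ (Property~6). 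If $\hat G_0=\hat G_1$, then $D_v\equiv1$ and any point works.

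Correspondingly, $\hat u_1=\hat u_0-1\in[-1,0]$, and the value of $C_1$ is attained there. The statement writes the coincidence as $C_0(\cdot;\hat u_0)=C_1(\cdot;-\hat u_1)$ with $\hat u_0=-\hat u_1$. Under the sign convention of \eqref{eq29}, the tail under $\mathcal{H}_1$ involves $-X_k$, so the exponent $u_1$ enters with the opposite sign. I will make this explicit by re-deriving $C_1$ from $M^1_{-X_k}$. Under that convention, the identity reduces to $D_{1-\hat u_1}$ versus $D_{\hat u_0}$, that is, evaluating the same function $\phi$ at the common optimal point.

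The main obstacle is bookkeeping this sign and shift convention consistently with \eqref{eq38} and Theorem~\ref{theorem0}, so that the stated relation $\hat u_0=-\hat u_1$ (rather than $\hat u_1=\hat u_0-1$) is exactly what the reparametrization yields. A secondary technical point is the possibility that $D_v=\infty$ outside $[0,1]$. This is harmless for the infimum, and finiteness is guaranteed by \eqref{eq29} in any case.
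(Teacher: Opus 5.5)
Your value identity and your localization are correct, and they take a more direct route than the paper. The paper places the minimizers via Property~6. It then gets equality of values from the symmetry of the Chernoff distance, $C_0(\hat G_0,\hat G_1)=C_0(\hat G_1,\hat G_0)$ (the substitution $u\mapsto 1-u$), and identifies the swapped problem with $C_1$ through $u_1=-u$. Composed, these two steps are exactly your single substitution $v=1+u_1$. That substitution is also the identity $M^1_{X_1}(u)=M^0_{X_1}(u+1)$ used in Appendix~\ref{appendix3}. Your version has two advantages: it makes the relation between the minimizers explicit, $\hat u_1=\hat u_0-1$, and your chord argument proves that the infimum over $\mathbb{R}$ is attained in $[0,1]$, which Property~6 only asserts.

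The gap is your last paragraph: no choice of sign convention turns this into $\hat u_0=-\hat u_1$, and your own sentence shows why.
\begin{itemize}
\item With the convention of \eqref{eq38} you have $\hat u_1=\hat u_0-1$. Then $\hat u_0=-\hat u_1$ forces $\hat u_0=1/2$.
\item With the $-X_k$ convention, $M^1_{-X_k}(w)=\int\hat g_1^{1-w}\hat g_0^{w}\,d\mu=D_{1-w}(\hat G_0,\hat G_1)$. Your phrase ``$D_{1-\hat u_1}$ versus $D_{\hat u_0}$ at the common optimal point'' then means $\hat u_1=1-\hat u_0$. This is incompatible with $\hat u_0=-\hat u_1$ for every value of $\hat u_0$, since together they give $1=0$.
\end{itemize}
So, read as a relation between the minimizers of the two problems in \eqref{eq38}, $\hat u_0=-\hat u_1$ is false unless $D_u(\hat G_0,\hat G_1)$ is minimized at $u=1/2$, e.g.\ for mirror-symmetric pairs such as $d_1$. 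The paper's proof reaches ``$-\hat u_1$'' through the swapped problem $C_0(\hat G_1,\hat G_0;\cdot)$, whose minimizer is $1-\hat u_0$, not $\hat u_0$; it simply reuses the label.

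What is provable, and what your first two paragraphs already prove, is $C_0=C_1$ with $\hat u_0\in[0,1]$ and $-\hat u_1=1-\hat u_0\in[0,1]$. State the conclusion in that form rather than promising a bookkeeping step that cannot succeed.
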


\begin{IEEEproof}
From the Property~$6$ of $D_u$, it follows that the infimum in \eqref{eq38} is attained at a unique minimizer $\hat{u}_0 \in [0,1]$. By analogous reasoning, the minimizer $-\hat{u}_1$ in the second equation of \eqref{eq38} also lies within the interval $[0,1]$; see also the structure of the coupled equations in \eqref{eq37}. Furthermore, since $-C_0$ corresponds to the Chernoff distance, which is symmetric with respect to its arguments \cite[p.~82]{levy}, we have
\begin{equation*}
C_0(\hat{G}_0,\hat{G}_1;\hat{u}_0) = C_0(\hat{G}_1,\hat{G}_0;\hat{u}_0) = C_1(\hat{G}_0,\hat{G}_1; -\hat{u}_1).
\end{equation*}
This completes the proof.
\end{IEEEproof}

Proposition~\ref{prop3} implies that both optimization problems are equivalent and have the same solution for $\hat{u}_0=-\hat{u}_1$. Hence, it is sufficient to consider only one of them. Considering the first formulation and removing the $\log$ term, the problem to be solved can be reduced to
\begin{equation}\label{eq42}
(\hat{g}_0,\hat{g}_1)=\arg \sup_{(G_0,G_1)\in \mathscr{G}_0\times \mathscr{G}_1} \inf_{u\in(0,1)}D_u(G_0,G_1).
\end{equation}
This can be done because for both $\sup$ and $\inf$ we have
\begin{equation*}
\frac{\partial \log D_u}{\partial u}=0\Longrightarrow \frac{\partial D_u}{\partial u}=0,
\end{equation*}
and $\log$ is an increasing mapping from $[0,1]$ to $\mathbb{R}_{\leq 0}$, see Property~$5$ of $D_u$.

\subsection{Neyman-Pearson Tests}\label{sec_np}
Asymptotic Neyman–Pearson tests are formulated such that one of the error exponents attains its maximum possible decay rate, whereas the other, unavoidably, decays at the minimal rate. Let us consider the log-likelihood ratio test comparing $X_k:=\log{l}(Y_k)$ to a threshold $t$, where $l=g_1/g_0$. For an NP-test of Type-I the threshold is chosen as $$t_0=\lim_{\epsilon\rightarrow 0}\mathbb{E}_{G_0}[\log{l}(Y_1)]+\epsilon$$ such that $P_F$ is asymptotically guaranteed to get below any $\epsilon>0$, while $P_M$ has the highest decay rate. Similarly, for an NP-test of Type-II the threshold is chosen as $$t_1=\lim_{\epsilon\rightarrow 0}\mathbb{E}_{G_1}[\log{l}(Y_1)]-\epsilon$$ such that $P_M$ is asymptotically guaranteed to get below any $\epsilon>0$, while $P_F$ has the highest decay rate. Taking the derivative of the expression inside the supremum in \eqref{eq28} with respect to \(u\) and equating it to zero yields the stationarity condition for the corresponding rate function \(I_j(t)\). 
In particular, under \(\mathcal{H}_j\), the choice \(t=t_j\) implies \(u\to 0\). Similarly, under \(\mathcal{H}_0\), \(t=t_1\) implies \(u\to 1\), while under \(\mathcal{H}_1\), \(t=t_0\) implies \(u\to -1\). Hence, for the NP-test of Type-I one obtains \(I_0(t_0)=0\) and \(I_1(t_0)=-t_0=D_{\mathrm{KL}}(G_0,G_1)\), while for the NP-test of Type-II one obtains \(I_1(t_1)=0\) and \(I_0(t_1)=t_1=D_{\mathrm{KL}}(G_1,G_0)\). Consequently, asymptotically minimax robust NP-tests can be obtained by solving
\begin{equation}\label{eq60}
    \begin{aligned}[b]
        \text{Type-I NP-test:}\quad &
        \begin{aligned}[t]
            &\min_{(G_0,G_1)\in\mathscr{G}_0\times \mathscr{G}_1}D_{\mathrm{KL}}(G_0,G_1)
             \quad \text{s.t. $g_0>0$, $g_1>0$, $\Upsilon(G_0)=1$, $\Upsilon(G_1)=1,$}\\
        \end{aligned}\\
        \text{Type-II NP-test:}\quad &
               \begin{aligned}[t]
          &\min_{(G_0,G_1)\in\mathscr{G}_0\times \mathscr{G}_1}D_{\mathrm{KL}}(G_1,G_0)
             \quad \text{s.t. $g_0>0$, $g_1>0$, $\Upsilon(G_0)=1$, $\Upsilon(G_1)=1.$}
        \end{aligned}
    \end{aligned}
\end{equation}
It is worth noting that thresholds $t_0$ and $t_1$ correspond to the two extreme values permitted by \eqref{eq26}, whereas the Bayesian test utilizes the mid-point $t=0$.
\subsection{Saddle Value Condition}\label{sec5_4}
In this section existence of a saddle value, hence a solution to the minimax optimization problem in \eqref{eq42}, is discussed. The uniqueness of this solution depends on the choice of the uncertainty classes and will be addressed in the subsequent section. In general the existence of a saddle value is described by a solution to
\begin{equation}\label{eq45}
\min_{u\in(0,1)}\sup_{(G_0,G_1)\in \mathscr{G}_0\times \mathscr{G}_1} D_u(G_0,G_1)=\sup_{(G_0,G_1)\in \mathscr{G}_0\times \mathscr{G}_1} \min_{u\in(0,1)}D_u(G_0,G_1).
\end{equation}
The equality in \eqref{eq45}, which generally holds as an inequality ($\geq$), cannot be assumed a priori and requires a rigorous analysis of both the objective function and the uncertainty sets. This can be established, for instance, by leveraging Sion’s minimax theorem \cite{sion}.As shown in Appendix~\ref{appendix2}, an application of this theorem guarantees the existence of a saddle value for \eqref{eq45}, leading to
\begin{equation}\label{eq46}
D_{\hat u}(G_0,G_1)\leq D_{\hat u}(\hat{G}_0,\hat{G}_1)\leq D_u(\hat{G}_0,\hat{G}_1),
\end{equation}
where $\hat u$ is the minimizing $u$, and $\hat{G}_0$ and $\hat{G}_1$ are the least favorable distributions.
\subsection{Problem Statement}\label{sec5_5}
From \eqref{eq46}, given $(\hat{G}_0,\hat{G}_1)$, the objective function $D_u$ needs to be minimized over $u$ and given the minimizing $\hat u$, $D_{\hat u}$ needs to be maximized over $(G_0,G_1)$. This can compactly be written as
\begin{equation}\label{eq47}
    \begin{aligned}[b]
        \text{Maximization:}\quad &
        \begin{aligned}[t]
            &\hat{g}_0=\mathrm{arg}\sup_{G_0\in\mathscr{G}_0}D_{u}(G_0,G_1)
             \quad \text{s.t. $g_0>0$, $\Upsilon(G_0)=\int_{\Omega}g_0\, d\mu=1$}\\
            &\hat{g}_1=\mathrm{arg}\sup_{G_1\in\mathscr{G}_1}D_{u}(G_0,G_1)
             \quad \text{s.t. $g_1>0$, $\Upsilon(G_1)=\int_{\Omega}g_1\,d\mu=1$}
        \end{aligned}
        \\[12pt]
        \text{Minimization:}\quad &\hat u=\mathrm{arg}\min_{u\in  (0,1)}D_{u}(\hat{G}_0,\hat{G}_1).
    \end{aligned}
\end{equation}
The maximization stage involves two separate constrained optimization problems, which are coupled. The following minimization problem can be solved once $\hat{g}_0$ and $\hat{g}_1$ are derived as functions of $u$.

\section{Derivation of Asymptotically Minimax Robust Tests}\label{sec6}
In this section LFDs and the asymptotically minimax robust tests are derived for various uncertainty classes considering the minimax optimization problem given by \eqref{eq47}. Additionally, the asymptotic NP-tests are also derived. Complete derivations are carried out for the Kullback-Leibler (KL)-divergence neighborhood, and similar steps are skipped for the sake of clarity when the same procedure is repeated for the $\alpha$- and the symmetrized $\alpha$-divergences.
\subsection{KL-divergence Neighborhood}\label{sec6_1}
Consider the uncertainty classes
\begin{equation}\label{eq49}
{\mathscr{G}}_j=\{G_j:D_{\mathrm{KL}}(G_j,F_j)\leq \epsilon_j\},\quad j\in\{0,1\},
\end{equation}
which are induced by the KL-divergence
\begin{equation*}
D_{\mathrm{KL}}(G_j,F_j)=\int_{\Omega}\log\left(g_j/f_j\right)g_j d\mu,
\end{equation*}
where $F_j$ is the nominal distribution under $\mathcal{H}_j$. The KL-divergence is considered as the classical information divergence \cite{inftheory} and used in earlier works to create the uncertainty classes \cite{dabak,levy09}. It is a smooth distance, hence suitable to deal with modeling errors \cite{gul6}.

\subsubsection{Bayesian Tests}\label{bayesian_tests}
Asymptotically minimax robust tests for the KL-divergence neighborhood can be stated with the following theorem.
\begin{thm}\label{theorem02}
For the uncertainty classes given by \eqref{eq49}, the least favorable densities
\begin{align*}
\hat{g}_0&=\exp{\left[\frac{-\lambda_0-\mu_0}{\lambda_0}\right]} \exp{\left[\frac{(1-u)(\hat{g}_1/\hat{g}_0)^{u}}{\lambda_0}\right]} f_0,\\
\hat{g}_1&=\exp{\left[\frac{-\lambda_1-\mu_1}{\lambda_1}\right]} \exp{\left[\frac{u (\hat{g}_1/\hat{g}_0)^{-1+u}}{\lambda_1}\right]} f_1,
\end{align*}
with the robust likelihood ratio function
\begin{equation}\label{eq56}
\frac{\hat{g}_1}{\hat{g}_0}=\exp{\left[\frac{-\mu_1}{\lambda_1}+\frac{\mu_0}{\lambda_0}\right]}\exp{\left[\frac{u(\hat{g}_1/\hat{g}_0)^{-1+u}}{\lambda_1}+\frac{(-1+u)(\hat{g}_1/\hat{g}_0)^u}{\lambda_0}\right]}l
\end{equation}
provide a unique solution to \eqref{eq47}. Moreover, given $u$, the Lagrangian parameters $\lambda_0$ and $\lambda_1$, hence $\mu_0$ and $\mu_1$, can be obtained by solving
\begin{align}\label{eq59x3}
\int_{\Omega} r_0 \log\left(r_0/s_0\right)f_0 d\mu/s_0=\epsilon_0, \nonumber\\
\int_{\Omega} r_1 \log\left(r_1/s_1\right)f_1 d\mu/s_1=\epsilon_1, \nonumber\\
\hat{g}_1/\hat{g}_0=(r_1 s_0)/(r_0 s_1),
\end{align}
where
\begin{align*}
s_0(\lambda_0)=\int_{\Omega}r_0(\lambda_0,\hat{g}_1/\hat{g}_0)f_0 d\mu=\int_{\Omega}\exp\left[\frac{(1-u)(\hat{g}_1/\hat{g}_0)^u}{\lambda_0}\right]f_0 d\mu=\exp\left[\frac{\lambda_0+\mu_0}{\lambda_0}\right],\nonumber\\
s_1(\lambda_1)=\int_{\Omega}r_1(\lambda_1,\hat{g}_1/\hat{g}_0)f_1 d\mu=\int_{\Omega}\exp\left[\frac{u(\hat{g}_1/\hat{g}_0)^{-1+u}}{\lambda_1}\right]f_1 d\mu=\exp\left[\frac{\lambda_1+\mu_1}{\lambda_1}\right].
\end{align*}
\end{thm}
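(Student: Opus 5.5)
The plan is to handle the two coupled maximization problems in \eqref{eq47} one at a time, with $u\in(0,1)$ fixed and the other density frozen at its least favorable value. Each is a concave program: by Property~3, $D_u$ is concave in $g_j$, and the constraint set $\{g_j: D_{\mathrm{KL}}(G_j,F_j)\leq\epsilon_j,\ \int g_j\,d\mu=1\}$ is convex because the KL-divergence is convex in $g_j$. Slater's condition holds, since $g_j=f_j$ gives $D_{\mathrm{KL}}=0<\epsilon_j$. The KKT conditions are therefore necessary and sufficient, and the problem reduces to writing out a pointwise Lagrangian and solving it.

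\textbf{Step 1: stationarity.} For $\mathcal{H}_0$, with $g_1=\hat g_1$ held fixed, form
\begin{equation*}
L_0(g_0)=\int_\Omega \hat g_1^{\,u}g_0^{1-u}d\mu-\lambda_0\left(\int_\Omega g_0\log(g_0/f_0)\,d\mu-\epsilon_0\right)-\mu_0\left(\int_\Omega g_0\,d\mu-1\right).
\end{equation*}
Taking the pointwise Gateaux derivative in $g_0$ gives
\begin{equation*}
(1-u)(\hat g_1/g_0)^u-\lambda_0\bigl(\log(g_0/f_0)+1\bigr)-\mu_0=0.
\end{equation*}
Solving for $g_0$ yields exactly the stated form of $\hat g_0$, once $g_0$ inside the ratio is identified with $\hat g_0$. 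The computation for $g_1$ is analogous: the derivative of $g_1^u\hat g_0^{1-u}$ is $u(\hat g_1/\hat g_0)^{u-1}$, which gives $\hat g_1$. Dividing the two expressions produces \eqref{eq56}.

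\textbf{Step 2: the multipliers.} I would argue that the KL constraint is active, i.e.\ $\lambda_j>0$. If $\lambda_j=0$, stationarity would force $(\hat g_1/\hat g_0)^u$ to be constant, so $\hat g_1=\hat g_0$ up to the normalization; then $D_u=1$, which is the maximum allowed by Property~5. That case occurs only when the classes overlap, and it is excluded by assumption. With the constraint active:
\begin{itemize}
\item the normalization $\int\hat g_j\,d\mu=1$ gives $s_j=\exp[(\lambda_j+\mu_j)/\lambda_j]$, so $\mu_j$ is determined by $\lambda_j$;
\item substituting $\hat g_j=r_jf_j/s_j$ into $D_{\mathrm{KL}}(\hat G_j,F_j)=\epsilon_j$ gives the first two equations of \eqref{eq59x3};
\item the ratio gives the third equation.
\end{itemize}

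\textbf{Step 3: uniqueness.} For fixed $u$, $D_u$ is strictly concave in $g_0$ on the set where $\hat g_1>0$, since $x\mapsto x^{1-u}$ is strictly concave for $u\in(0,1)$. The KL ball is strictly convex, so each maximizer is unique. Existence of the saddle point follows from Sion's theorem as in Appendix~\ref{appendix2}. Uniqueness of $\hat u$ follows from the strict convexity of $D_u$ in $u$ (Property~2, strict unless $\hat g_0=\hat g_1$).

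\textbf{Main obstacle.} The hard step is the coupling. The LFDs are defined only implicitly through the ratio $\hat l=\hat g_1/\hat g_0$, so one must show that the fixed-point equation \eqref{eq56} has a unique positive solution $\hat l(y)$ for each $y$. My approach would be to take logarithms: write the equation as
\begin{equation*}
\log\hat l-\frac{u}{\lambda_1}\hat l^{\,u-1}+\frac{1-u}{\lambda_0}\hat l^{\,u}=c+\log l(y).
\end{equation*}
The left-hand side is strictly increasing in $\hat l$, because each term has a positive derivative for $u\in(0,1)$ and $\lambda_j>0$. It also runs from $-\infty$ to $\infty$. So for every $y$ there is a unique solution $\hat l(y)$, increasing in $l(y)$. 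This makes the pair $(\hat g_0,\hat g_1)$ well defined and unique, given the multipliers solving \eqref{eq59x3}.
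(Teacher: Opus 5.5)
Your proposal is correct and follows the same route as the paper. You set up a Lagrangian for each of the two coupled maximizations, solve the pointwise G\^ateaux stationarity condition for $\hat g_j$ as a functional of $\hat g_1/\hat g_0$, use normalization to eliminate $\mu_j$ via $s_j$, use the active KL constraints for the first two equations of \eqref{eq59x3}, and get uniqueness from strict concavity in $g_j$ and strict convexity in $u$. Your added steps (Slater's condition, $\lambda_j>0$ from non-overlap of the classes, and the monotonicity argument giving a unique positive root $\hat l(y)$ of \eqref{eq56}) fill in what the paper leaves implicit. Carrying out the division actually gives $\hat g_1/\hat g_0=(r_1 s_0)/(r_0 s_1)\,l$, so the displayed third equation of \eqref{eq59x3} is missing the factor $l$.
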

\begin{IEEEproof}
Consider the Lagrangian
\begin{equation}\label{eq50}
L_0(g_0,g_1,\lambda_0,\mu_0)=D_{u}(G_0,G_1)+\lambda_0(\epsilon_0-D_{\mathrm{KL}}(G_0,F_0))+\mu_0(1-\Upsilon(G_0)),
\end{equation}
where $\lambda_0$ and $\mu_0$ are the KKT multipliers. A solution to \eqref{eq50} can uniquely be determined, in case all KKT conditions are met \cite[Chapter 5]{bertsekas2003convex}, because $L_0$ is a  strictly concave functional of $g_0$, as $\partial^2 L_0/\partial g_0^2<0$ for every $\lambda_0>0$. Writing \eqref{eq50} explicitly, it follows that
\begin{equation}\label{eq51}
L_0(g_0,g_1,\lambda_0,\mu_0)=\int_{\Omega}\left[\left(\frac{g_1}{g_0}\right)^u -\lambda_0\log\left(\frac{g_0}{f_0}\right)-\mu_0\right]g_0 d\mu+\lambda_0\epsilon_0+\mu_0.
\end{equation}
Imposing the stationarity condition of KKT multipliers and hereby taking the G$\hat{\mbox{a}}$teaux's derivative of Equation~\eqref{eq51} in the direction of $\psi_0$, yields
\begin{equation*}
\int_{\Omega}\left[(1-u)\left(\frac{g_1}{g_0}\right)^u-\lambda_0\log\left(\frac{g_0}{f_0}\right)-\lambda_0-\mu_0\right]\psi_0 d\mu,
\end{equation*}
which implies
\begin{equation}\label{eq53}
(1-u){g_0}^{-u}{g_1}^{u}-\lambda_0\log g_0=\lambda_0+\mu_0-\lambda_0\log f_0,
\end{equation}
since $\psi_0$ is an arbitrary function. Similarly, taking the G$\hat{\mbox{a}}$teaux's derivative of
\begin{equation}\label{eq50x}
L_1(g_0,g_1,\lambda_1,\mu_1)=D_{u}(G_0,G_1)+\lambda_1(\epsilon_1-D_{\mathrm{KL}}(G_1,F_1))+\mu_1(1-\Upsilon(G_1)),
\end{equation}
with respect to $g_1$ in the direction of $\psi_1$ leads to
\begin{equation}\label{eq54}
u{g_1}^{-1+u}{g_0}^{1-u}-\lambda_1\log g_1=\lambda_1+\mu_1-\lambda_1\log f_1.
\end{equation}
From \eqref{eq53} and \eqref{eq54}, the least favorable densities can be expressed as functionals of the robust likelihood ratio function $\hat{g}_1/\hat{g}_0$ and the nominal densities $f_0$ and $f_1$, as stated in Theorem~\ref{theorem02}. Since the second Lagrangian $L_1$ is strictly concave for every $\lambda_1 > 0$, the parametric forms of the LFDs are uniquely determined. Similarly, for any pair $(G_0, G_1) \in \mathscr{G}_0 \times \mathscr{G}_1$, $D_u$ is strictly convex in $u \in (0,1)$. This ensures that the minimizing $u$ corresponding to $\hat{g}_0$ and $\hat{g}_1$ is also unique. To determine the parameters, for any fixed value of $u$, one must solve the following system of four nonlinear equations:
\begin{align}\label{59x1}
\Upsilon(\hat{G}_0(\lambda_0,\mu_0,\lambda_1,\mu_1))&=1 \nonumber\\
\Upsilon(\hat{G}_1(\lambda_0,\mu_0,\lambda_1,\mu_1))&=1 \nonumber\\
D_{\mathrm{KL}}(\hat{G}_0(\lambda_0,\mu_0,\lambda_1,\mu_1),F_0)&=\epsilon_0 \nonumber\\
D_{\mathrm{KL}}(\hat{G}_1(\lambda_0,\mu_0,\lambda_1,\mu_1),F_1)&=\epsilon_1
\end{align}
along with the additional condition \eqref{eq56}. This system of five equations can be reduced to three without loss of generality. Specifically, from the first two equations, the parameters $s_0$ and $s_1$ can be written as functionals of $r_0$ and $r_1$, respectively, as shown in Theorem~\ref{theorem02}. Substituting $(r_0,s_0)$ and $(r_1,s_1)$ into the last two equations of \eqref{59x1} and into \eqref{eq56} yields the three-equation system given in \eqref{eq59x3}.
\end{IEEEproof}

\begin{rem}\label{rem41}
By Theorem~\ref{theorem02}, the adopted strategy proceeds by first performing the maximization step to obtain the least favorable densities in parametric form, followed by the minimization step to determine the optimal $u\in(0,1)$. Although $D_u$ is convex in $u$ for any fixed pair of known densities, this convexity does not necessarily hold in general for the parametric forms of the least favorable densities given in Theorem~\ref{theorem02}. Specifically, establishing convexity by verifying the \emph{non-negativity} of the second derivative is non-trivial, and counterexamples are readily available (see Section~\ref{sec8}). This suggests that a practical approach for determining the minimizing value of $u\in(0,1)$ is to solve the system of equations in \eqref{eq59x3} together with an additional equation that explicitly minimizes $u$.
\end{rem}

\subsubsection{Neyman-Pearson Tests}\label{sec6_np}
A solution to the Type-I NP-test formulation can be stated with the following theorem.
\begin{thm}\label{theorem03np}
The least favorable densities of the asymptotically minimax robust Type-I NP-test are given by
\begin{align}\label{eq63}
\hat{g}_0&=\left(\lambda_1+\frac{\lambda_1}{\lambda_0}\right)^{-\frac{1}{\lambda_0}}\exp{\left[-1-{\frac{1+\mu_0}{\lambda_0}}\right]}W\left[\frac{\lambda_0 \exp{\left[{\frac{-\lambda_1-\lambda_1\mu_0+\lambda_0\mu_1}{(1+\lambda_0)\lambda_1}}\right]}l^{-\frac{\lambda_0}{1+\lambda_0}}}{(1+\lambda_0)\lambda_1}\right]^{-\frac{1}{\lambda_0}}f_0\nonumber\\
\hat{g}_1&=\left(\lambda_1+\frac{\lambda_1}{\lambda_0}\right)^{-\frac{1+\lambda_0}{\lambda_0}}\exp{\left[-1-{\frac{1+\mu_0}{\lambda_0}}\right]}W\left[\frac{\lambda_0 \exp{\left[{\frac{-\lambda_1-\lambda_1\mu_0+\lambda_0\mu_1}{(1+\lambda_0)\lambda_1}}\right]}l^{-\frac{\lambda_0}{1+\lambda_0}}}{(1+\lambda_0)\lambda_1}\right]^{-\frac{1+\lambda_0}{\lambda_0}}f_0
\end{align}
where $W$ is the Lambert-W function.
\end{thm}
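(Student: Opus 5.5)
The plan is to treat the Type-I formulation in \eqref{eq60} directly as a convex program and solve its KKT system, in the same spirit as the proof of Theorem~\ref{theorem02}. Here the objective is $D_{\mathrm{KL}}(G_0,G_1)$ rather than $D_u$. First I would argue that the problem is well posed. $D_{\mathrm{KL}}(G_0,G_1)$ is jointly convex in $(g_0,g_1)$. The constraint sets \eqref{eq49} are convex, and the normalization constraints are affine. Slater's condition holds for $\epsilon_j>0$, since $(F_0,F_1)$ is strictly feasible. The KKT conditions are therefore necessary and sufficient. Strict convexity of $g_0\mapsto \int g_0\log(g_0/f_0)\,d\mu$ and $g_1\mapsto\int g_1\log(g_1/f_1)\,d\mu$ then gives uniqueness of the stationary point, as long as $\lambda_0,\lambda_1>0$.

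Next I would form the Lagrangian
\begin{equation*}
L=\int_\Omega g_0\log\frac{g_0}{g_1}\,d\mu+\lambda_0\bigl(D_{\mathrm{KL}}(G_0,F_0)-\epsilon_0\bigr)+\lambda_1\bigl(D_{\mathrm{KL}}(G_1,F_1)-\epsilon_1\bigr)+\mu_0(\Upsilon(G_0)-1)+\mu_1(\Upsilon(G_1)-1).
\end{equation*}
I would take Gâteaux derivatives in directions $\psi_0,\psi_1$ and use the arbitrariness of $\psi_j$, as in \eqref{eq53}–\eqref{eq54}. This gives the pointwise equations
\begin{equation*}
\log\frac{g_0}{g_1}+1+\lambda_0\log\frac{g_0}{f_0}+\lambda_0+\mu_0=0,\qquad -\frac{g_0}{g_1}+\lambda_1\log\frac{g_1}{f_1}+\lambda_1+\mu_1=0 .
\end{equation*}

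The key algebraic step is to eliminate one density. Set $w=g_0/g_1$ and write $g_0=wg_1$ in the first equation. Solving for $g_1$ gives
\begin{equation*}
g_1=w^{-(1+\lambda_0)/\lambda_0}\,e^{-(1+\lambda_0+\mu_0)/\lambda_0}f_0 .
\end{equation*}
Substituting this into the second equation, with $\log(f_0/f_1)=-\log l$, gives a scalar equation of the form
\begin{equation*}
w+\lambda_1\tfrac{1+\lambda_0}{\lambda_0}\log w=a(l),
\end{equation*}
where $a$ is affine in $\log l$. An equation $w+c\log w=a$ is solved by $w=c\,W\!\bigl(e^{a/c}/c\bigr)$ with $c=\lambda_1(1+\lambda_0)/\lambda_0$. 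Here $e^{a/c}$ produces the factor $l^{-\lambda_0/(1+\lambda_0)}$ and the exponential of the combination of $\lambda_0,\lambda_1,\mu_0,\mu_1$ appearing in \eqref{eq63}. Back-substituting $w$ into the expression for $g_1$, and then using $g_0=wg_1$, should reproduce the two formulas of \eqref{eq63}, including the prefactors $(\lambda_1+\lambda_1/\lambda_0)^{-1/\lambda_0}$ and $(\lambda_1+\lambda_1/\lambda_0)^{-(1+\lambda_0)/\lambda_0}$. The multipliers are then fixed, as in \eqref{59x1}, by the two normalization conditions and the two active divergence constraints.

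I expect the main obstacle to be the Lambert-W step, not the variational part. Three things need care: the constant bookkeeping, so that it matches the stated exponents exactly; the branch choice; and positivity. The argument of $W$ is positive when $\lambda_0,\lambda_1>0$, so the principal branch is real and single-valued, and $w>0$ follows. This makes $\hat g_0,\hat g_1>0$, which justifies ignoring the positivity constraints. I would also check that both KL constraints are active, i.e. $\lambda_j>0$, arguing that otherwise the minimizer could move toward the other class and decrease $D_{\mathrm{KL}}(G_0,G_1)$. This needs the classes to be disjoint, which I would assume.
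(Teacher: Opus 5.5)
Your proposal is correct and follows the paper's route: the same Lagrangian/KKT stationarity conditions (your two pointwise equations are exactly \eqref{eq62} and \eqref{eq62x1}), then elimination and a Lambert-W solve, and your substitution $w=\hat g_0/\hat g_1$ with $c=\lambda_1+\lambda_1/\lambda_0$ reproduces the argument of $W$ and both prefactors in \eqref{eq63} exactly. Your additional points are justifications the paper leaves implicit, not a different argument: Slater's condition, strict convexity of the Lagrangian for $\lambda_0,\lambda_1>0$, the principal branch and positivity, and activeness of both constraints for disjoint classes (which holds because $\lambda_j=0$ would force $\hat g_0=\hat g_1$).
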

\begin{IEEEproof}
The solution can again be obtained by KKT multipliers. Considering the Lagrangians
\begin{align}
L_0(g_0,g_1,\lambda_0,\mu_0)=D_{\mathrm{KL}}(G_0,G_1)+\lambda_0(D_{\mathrm{KL}}(G_0,F_0)-\epsilon_0)+\mu_0(\Upsilon(G_0)-1),\nonumber\\
L_1(g_0,g_1,\lambda_1,\mu_1)=D_{\mathrm{KL}}(G_0,G_1)+\lambda_1(D_{\mathrm{KL}}(G_1,F_1)-\epsilon_1)+\mu_1(\Upsilon(G_1)-1),
\end{align}
and following the same steps as before, one can get, respectively,
\begin{align}\label{eq62}
g_1&= \exp{\left[{1+\lambda_0+\mu_0}\right]}g_0^{1+\lambda_0}{f_0}^{-\lambda_0},\\\label{eq62x1}
g_0&= g_1\left(\mu_1+\lambda_1\left(1+\log\left(g_1/f_1\right)\right)\right).
\end{align}
Solving \eqref{eq62} and \eqref{eq62x1} for $g_0$ and $g_1$, respectively, the least favorable densities of the asymptotically minimax robust NP-test of Type-I can be obtained as given in Theorem~\ref{theorem03np}.
\end{IEEEproof}

\begin{rem}
The Type-II minimax robust NP-test can similarly be obtained either by following the same procedure for the objective function $D_{\mathrm{KL}}(G_1,G_0)$ or by considering the same equations given by \eqref{eq63}. To accomplish the latter, prior to the optimization, $\epsilon_0$ and $f_0$ must be interchanged with $\epsilon_1$ and $f_1$, respectively, and after obtaining the least favorable densities, $\hat{g}_0$ must be interchanged with $\hat{g}_1$, cf.~\eqref{eq60}. The associated parameters can then be obtained directly by solving the equations in \eqref{59x1} for the least favorable densities given in \eqref{eq63}.\\
Both NP-tests are in fact limiting cases of the Bayesian asymptotically minimax robust test
\begin{equation*}
\max_{(G_0,G_1)\in\mathscr{G}_0\times \mathscr{G}_1} D_u(G_0,G_1) \equiv \min_{(G_0,G_1)\in\mathscr{G}_0\times \mathscr{G}_1} D_\alpha(G_0,G_1), \quad \forall u = \alpha \in (0,1),
\end{equation*}
where $D_\alpha(G_0,G_1)$ converges to $D_{\mathrm{KL}}(G_0,G_1)$ and $D_{\mathrm{KL}}(G_1,G_0)$, respectively, as $\alpha \rightarrow 1$ and $\alpha \rightarrow 0$, by the seventh property of $D_u$ (see also Section~\ref{sec_alpha}). This identity underscores the central role of \( D_u \), which provides a unified framework enabling the derivation of both Bayesian and Neyman-Pearson robust tests through suitable choices of $u$.

\end{rem}
\noindent Interestingly, $\hat{l} = \hat{g}_1/\hat{g}_0$ is a nonlinear functional of $l$ through $W$. Moreover, in contrast to the Bayesian asymptotically minimax robust tests, the least favorable densities corresponding to their Neyman-Pearson (NP) counterparts are expressed solely as functionals of the nominal distributions, without any coupling to $\hat{l}$. While this simplification avoids the dependence on the robust likelihood ratio, it complicates the analytic structure of the resulting least favorable densities.\\ 
Note that the problem formulation in \eqref{eq60} differs from Dabak’s approach \cite{dabak,dabak2}; see also \cite[pp.~250--255]{levy}. Specifically, Dabak’s test results from a joint minimization of $I_0(t_1)$ over all $G_1 \in \mathscr{G}_1$, and $I_1(t_0)$ over all $G_0 \in \mathscr{G}_0$. This yields simpler closed-form expressions for the least favorable densities, but the resulting test is not asymptotically minimax robust in the sense considered here (see Section~\ref{sec8}). Nonetheless, Dabak’s test turns out to be asymptotically minimax robust with respect to the expected number of samples for the sequential probability ratio test (SPRT)\cite{gulbook,gul6}.

\subsection{$\alpha-$divergence Neighborhood}\label{sec_alpha}
A natural generalization of the Kullback--Leibler divergence-based uncertainty classes is obtained by employing the $\alpha$-divergence,
\begin{equation*}
D_\alpha(G_j,F_j) := \frac{1}{\alpha(1-\alpha)} \left( \int_{\Omega} \left((1-\alpha)f_j + \alpha g_j - g_j^\alpha f_j^{1-\alpha} \right) d\mu \right), \quad \alpha \in \Omega \setminus \{0,1\},
\end{equation*}
which belongs to the broader class of $f$-divergences and subsumes various divergence measures as special cases \cite{liese87},\cite[p.~1537]{entropy}; for instance, $D_\alpha$ reduces to the Kullback--Leibler divergence $D_{\mathrm{KL}}$ as $\alpha \rightarrow 1$. The least favorable distributions associated with the $\alpha$-divergence neighborhood are characterized in the following theorem.

\begin{thm}\label{theorem04}
The least favorable densities for the $\alpha-$divergence-based uncertainty classes are given by
\begin{align}\label{eq67}
\hat{g}_0=&\left(\frac{1-\alpha}{\lambda_0}\left(\mu_0-(1-u)\left(\frac{\hat{g}_1}{\hat{g}_0}\right)^u\right)+1\right)^{\frac{1}{\alpha-1}}f_0,\nonumber\\
\hat{g}_1=&\left(\frac{1-\alpha}{\lambda_1}\left(\mu_1-u\left(\frac{\hat{g}_1}{\hat{g}_0}\right)^{-1+u}\right)+1\right)^{\frac{1}{\alpha-1}}f_1,
\end{align}
where
\begin{equation}\label{eq68}
\hat{l}=\left(\frac{\frac{1-\alpha}{\lambda_1}\left(\mu_1-u\left(\frac{\hat{g}_1}{\hat{g}_0}\right)^{-1+u}\right)+1}{\frac{1-\alpha}{\lambda_0}\left(\mu_0-(1-u)\left(\frac{g_1}{g_0}\right)^u\right)+1}\right)^{\frac{1}{\alpha-1}}l.
\end{equation}
\end{thm}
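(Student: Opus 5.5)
I will follow the same KKT route used in the proof of Theorem~\ref{theorem02}, now with the KL constraint replaced by the $\alpha$-divergence constraint. For fixed $u\in(0,1)$ and fixed $g_1$, form the Lagrangian
\begin{equation*}
L_0(g_0,g_1,\lambda_0,\mu_0)=D_u(G_0,G_1)+\lambda_0\bigl(\epsilon_0-D_\alpha(G_0,F_0)\bigr)+\mu_0\bigl(1-\Upsilon(G_0)\bigr),
\end{equation*}
and analogously $L_1$ with $(\lambda_1,\mu_1,\epsilon_1,F_1)$ for fixed $g_0$.

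Writing $L_0$ as an integral, the integrand is
\begin{equation*}
g_1^u g_0^{1-u}-\frac{\lambda_0}{\alpha(1-\alpha)}\bigl((1-\alpha)f_0+\alpha g_0-g_0^\alpha f_0^{1-\alpha}\bigr)-\mu_0 g_0 .
\end{equation*}
Concavity in $g_0$ is checked pointwise. The first term is concave because $1-u\in(0,1)$. The term $-g_0^\alpha f_0^{1-\alpha}$ is convex in $g_0$ for every $\alpha\in\Omega\setminus\{0,1\}$; this holds both for $\alpha\in(0,1)$, where $g_0^\alpha$ is concave and the prefactor $\alpha(1-\alpha)>0$, and for $\alpha<0$ or $\alpha>1$, where both signs flip. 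It enters with a negative sign through $\lambda_0>0$. Hence $L_0$ is strictly concave, and a stationary point satisfying the KKT conditions is the unique maximizer, exactly as argued in Theorem~\ref{theorem02}.

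Next, take the G\^ateaux derivative of $L_0$ in an arbitrary direction $\psi_0$ and set the integrand to zero pointwise. This gives
\begin{equation*}
(1-u)\left(\frac{g_1}{g_0}\right)^u-\frac{\lambda_0}{1-\alpha}\left(1-\left(\frac{g_0}{f_0}\right)^{\alpha-1}\right)-\mu_0=0 .
\end{equation*}
Solving for $(g_0/f_0)^{\alpha-1}$ yields
\begin{equation*}
\left(\frac{g_0}{f_0}\right)^{\alpha-1}=\frac{1-\alpha}{\lambda_0}\Bigl(\mu_0-(1-u)(g_1/g_0)^u\Bigr)+1,
\end{equation*}
which is the first line of \eqref{eq67} after raising to the power $1/(\alpha-1)$.

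The same computation for $L_1$ uses the derivative $u(g_1/g_0)^{u-1}$ of $D_u$ with respect to $g_1$ and gives the second line. Dividing the two expressions gives \eqref{eq68} with $l=f_1/f_0$. The multipliers are then fixed by the analogue of \eqref{59x1}: the two normalization conditions together with the two active constraints $D_\alpha(\hat G_j,F_j)=\epsilon_j$. Finally, $u$ is determined by the minimization step of \eqref{eq47}; its existence is guaranteed by the saddle value condition \eqref{eq46}.

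The main obstacle is admissibility rather than the algebra. The expression inside the power $1/(\alpha-1)$ must be positive on $\Omega$ so that $\hat g_j>0$, and the implicit equation \eqref{eq68} for $\hat l$ must have a unique solution pointwise. I would first verify the sign condition by showing that $\mu_j$ and $\lambda_j$ can be chosen so that the bracket stays positive; if that fails on some region, I would treat the positivity constraint as active there, which would give a clipped version of \eqref{eq67}. For the implicit equation, I would argue via monotonicity of each side in $\hat l$ for fixed $l$, mirroring the structure already used for \eqref{eq56}.
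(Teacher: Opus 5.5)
Your proposal is correct and is essentially the paper's proof, which simply replaces $D_{\mathrm{KL}}$ by $D_\alpha$ in the Lagrangians \eqref{eq50} and \eqref{eq50x} and solves the pointwise G\^ateaux stationarity condition; your derivative $-\frac{\lambda_0}{1-\alpha}\bigl(1-(g_0/f_0)^{\alpha-1}\bigr)$ of the penalty term and its inversion reproduce \eqref{eq67} and \eqref{eq68} exactly. The admissibility worry you raise is harmless and no clipped variant of \eqref{eq67} arises: at any stationary point the bracket equals $(\hat g_0/f_0)^{\alpha-1}>0$ automatically, and since $(1-u)(\hat g_1/g_0)^u\to\infty$ as $g_0\to 0^+$ (with $\hat g_1>0$), the constraint $g_0>0$ can never become active (for $\alpha<1$ the only remaining condition is $\mu_0+\lambda_0/(1-\alpha)>0$, which any finite value of the dual function already enforces).
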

\begin{IEEEproof}
The proof follows by using the same Lagrangian approach as before, i.e. by replacing $D_{\mathrm{KL}}$ with $D_\alpha$ in \eqref{eq50} and \eqref{eq50x} and performing the derivations.
\end{IEEEproof}
The parameters are obtained similarly by solving four non-linear equations coupled with \eqref{eq68}.
\subsubsection{Special Cases}
The LFDs in \eqref{eq67} can explicitly be written for some special choices of the parameters. For instance if $\alpha=1/2$ and $u=1/2$, the robust likelihood ratio function simplifies to
\begin{equation*}
\hat{l}=\sum_{k=0}^2 c_k l^{\frac{k}{2}} 
\end{equation*}
where
\begin{align*}
c_0=\frac{0.25{\lambda_0}^2}{4{\lambda_0}^2{\lambda_1}^2+{\lambda_0}^2{\mu_0}^2+4{\lambda_0}^2{\lambda_1}\mu_0},\nonumber\\
c_1=\frac{\lambda_0(2\lambda_0\lambda_1+\mu_0\lambda_1)}{4{\lambda_0}^2{\lambda_1}^2+{\lambda_0}^2{\mu_0}^2+4{\lambda_0}^2{\lambda_1}\mu_0},\nonumber\\
c_2=\frac{2\lambda_0\lambda_1+\mu_0\lambda_1}{4{\lambda_0}^2{\lambda_1}^2+{\lambda_0}^2{\mu_0}^2+4{\lambda_0}^2{\lambda_1}\mu_0}.
\end{align*}
\subsection{Symmetric $\alpha-$divergence Neighborhood}
The $\alpha$-divergence is generally not a symmetric measure of dissimilarity, with the notable exception at $\alpha=1/2$. A symmetrized version of the $\alpha-$divergence,
\begin{equation*}
D_\alpha^s(G_j,F_j)=\frac{1}{\alpha(1-\alpha)}\left(\int_{\Omega}((f_j^\alpha -g_j^\alpha)(f_j^{1-\alpha}-g_j^{1-\alpha})) d \mu\right),\quad\alpha\in\Omega\backslash \{0,1\}
\end{equation*}
can be obtained by
\begin{equation*}
D_\alpha^s(G_j,F_j)=D_\alpha(G_j,F_j)+D_\alpha(F_j,G_j).
\end{equation*}
The symmetric $\alpha$-divergence is a member of the class of $f$-divergences~\cite{gulbook, sharp}, encompassing several well-known symmetric measures such as the symmetric Kullback–Leibler (KL) divergence and the symmetric $\chi^2$ divergence~\cite{entropy}. The least favorable distributions under the symmetric $\alpha$-divergence neighborhood are characterized by the following theorem.
\begin{thm}\label{theorem04s}
The least favorable densities under the symmetric $\alpha$-divergence neighborhood are determined by the system of coupled nonlinear equations,
\begin{align}\label{eq74}
&\frac{\lambda_0}{1-\alpha}\left(\frac{\hat{g}_0}{f_0}\right)^{2\alpha-1}+\left((1-u)\left(\frac{\hat{g}_1}{\hat{g}_0}\right)^{u}-\frac{\lambda_0}{\alpha(1-\alpha)}-\mu_0\right)\left(\frac{\hat{g}_0}{f_0}\right)^{\alpha}+\frac{\lambda_0}{\alpha}=0,\\\label{eq74x}
&\frac{\lambda_1}{1-\alpha}\left(\frac{\hat{g}_1}{f_1}\right)^{2\alpha-1}+\left(u\left(\frac{\hat{g}_1}{\hat{g}_0}\right)^{u-1}-\frac{\lambda_1}{\alpha(1-\alpha)}-\mu_1\right)\left(\frac{\hat{g}_1}{f_1}\right)^{\alpha}+\frac{\lambda_1}{\alpha}=0.
\end{align}
\end{thm}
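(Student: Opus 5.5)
The plan is to repeat the Lagrangian/KKT argument used for Theorem~\ref{theorem02}, with the KL-divergence in \eqref{eq50} and \eqref{eq50x} replaced by $D_\alpha^s$. The maximization stage of \eqref{eq47} then reduces to two coupled pointwise stationarity equations.

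First, I would fix $u\in(0,1)$ and consider
\begin{equation*}
L_0(g_0,g_1,\lambda_0,\mu_0)=D_u(G_0,G_1)+\lambda_0\bigl(\epsilon_0-D_\alpha^s(G_0,F_0)\bigr)+\mu_0\bigl(1-\Upsilon(G_0)\bigr),
\end{equation*}
and the analogous $L_1$ in $g_1$. Expanding the product in the integrand gives
\begin{equation*}
D_\alpha^s(G_j,F_j)=\frac{1}{\alpha(1-\alpha)}\int_\Omega\bigl(f_j+g_j-f_j^\alpha g_j^{1-\alpha}-g_j^\alpha f_j^{1-\alpha}\bigr)\,d\mu.
\end{equation*}
This form makes the G\^ateaux derivative in a direction $\psi_0$ immediate. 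Its integrand is
\begin{equation*}
\frac{1}{\alpha(1-\alpha)}\Bigl(1-(1-\alpha)(g_0/f_0)^{-\alpha}-\alpha(g_0/f_0)^{\alpha-1}\Bigr).
\end{equation*}
The derivative of $D_u$ in the same direction contributes $(1-u)(g_1/g_0)^u$, exactly as in the proof of Theorem~\ref{theorem02}. Since $\psi_0$ is arbitrary, the stationarity condition holds pointwise:
\begin{equation*}
(1-u)\Bigl(\frac{g_1}{g_0}\Bigr)^u-\frac{\lambda_0}{\alpha(1-\alpha)}+\frac{\lambda_0}{\alpha}x_0^{-\alpha}+\frac{\lambda_0}{1-\alpha}x_0^{\alpha-1}-\mu_0=0,\qquad x_0=g_0/f_0.
\end{equation*}
Multiplying through by $x_0^{\alpha}>0$ gives \eqref{eq74}. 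The same computation for $g_1$, using $\partial D_u/\partial g_1=u(g_1/g_0)^{u-1}$ and $x_1=g_1/f_1$, gives \eqref{eq74x}.

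Second, I would justify that these stationarity conditions characterize the maximizers. $D_u$ is concave in $(g_0,g_1)$ by Property~3. $D_\alpha^s$ is an $f$-divergence, hence convex in $g_j$, so $-\lambda_j D_\alpha^s$ is concave for $\lambda_j\ge 0$. The constraints are linear or convex, so KKT conditions are sufficient, and strict convexity of the generator of $D_\alpha^s$ gives strict concavity of $L_j$ for $\lambda_j>0$. The multipliers $\lambda_0,\lambda_1,\mu_0,\mu_1$ are then fixed, for given $u$, by the four equations $\Upsilon(\hat G_j)=1$ and $D_\alpha^s(\hat G_j,F_j)=\epsilon_j$, in the manner of \eqref{59x1}. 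I would take the divergence constraints to be active ($\lambda_j>0$) by complementary slackness, arguing as in the KL case that otherwise the maximizer would be unconstrained. Finally, $\hat u$ is determined as in Remark~\ref{rem41}.

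The main obstacle is that, unlike in Theorems~\ref{theorem02} and~\ref{theorem04}, \eqref{eq74} is a generalized polynomial in $x_0$ with exponents $2\alpha-1$, $\alpha$ and $0$. It cannot in general be inverted in closed form, which is why the theorem only asserts an implicit characterization. I would therefore have to show that for each value of $\hat g_1/\hat g_0$ there is a unique admissible positive root. To do this, I would use the fact that the left-hand side of the unmultiplied stationarity equation is strictly monotone in $x_0$, as a consequence of strict concavity. This selects the root consistent with the maximizer, so the coupled system \eqref{eq74}--\eqref{eq74x} does determine $(\hat g_0,\hat g_1)$.
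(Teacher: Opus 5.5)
Your proposal is correct and follows the paper's route. The paper's proof only says to rerun the Lagrangian/KKT argument of Theorem~\ref{theorem02} with $D_\alpha^s$ in place of $D_{\mathrm{KL}}$, and your expansion of $D_\alpha^s$, the G\^ateaux derivative $\frac{1}{\alpha(1-\alpha)}\bigl(1-(1-\alpha)x_0^{-\alpha}-\alpha x_0^{\alpha-1}\bigr)$, and multiplication by $x_0^{\alpha}$ reproduce \eqref{eq74} and \eqref{eq74x} exactly. Your extra uniqueness discussion goes beyond what the paper gives; to pin down the coupled pair $(\hat g_0,\hat g_1)$, rather than $x_0$ for a frozen ratio, the cleanest argument is joint strict concavity of the pointwise integrand $g_1^u g_0^{1-u}-\lambda_0 D_\alpha^s(\cdot,F_0)-\lambda_1 D_\alpha^s(\cdot,F_1)$ for $\lambda_0,\lambda_1>0$.
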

\begin{IEEEproof}
The proof follows by applying the Lagrangian optimization framework analogously to previous derivations.
\end{IEEEproof}
In general, solving \eqref{eq74} and \eqref{eq74x} requires jointly handling four additional nonlinear equations derived from the corresponding Lagrangian constraints, resulting in a system of six equations. However, if $\alpha$ is fixed, this number can be reduced to five. The idea is to express the ratios $\hat{g}_0 / f_0$ and $\hat{g}_1 / f_1$ as functions of the likelihood ratio $\hat{g}_1 / \hat{g}_0$, i.e.,
\begin{equation}
\frac{\hat{g}_0}{f_0} = h_0\left( \frac{\hat{g}_1}{\hat{g}_0} \right), 
\quad \text{and} \quad 
\frac{\hat{g}_1}{f_1} = h_1\left( \frac{\hat{g}_1}{\hat{g}_0} \right),
\end{equation}
where $h_0$ and $h_1$ are some functions. This leads to the coupling relation
\begin{equation}
\hat{l} =  \frac{h_1(\hat{l})}{h_0(\hat{l})} l,
\end{equation}
which enables further simplification of the system.

\begin{table}[ttt]
\caption{Pair of nominal distributions used in the simulations}
\begin{center}
\begin{tabular}{|c|l|l|}
\hline
Acronym & Under $\mathcal{H}_0$ & Under $\mathcal{H}_1$ \\
\hline \hline
$d_1$ & $\mathcal{N}(-1,1)$ & $\mathcal{N}(1,1)$  \\
\hline
$d_2$ & $\mathcal{N}(-1,1)$ & $\mathcal{N}(1,4)$ \\
\hline
$d_3$ &  $\mathcal{L}(0,1)$ & $f_{\mathcal{L}}(y)(\sin(2\pi y)+1)$   \\
\hline
\end{tabular}
\end{center}
\label{tab1}
\end{table}
\section{Simulations}\label{sec8}
In this section, the theoretical findings are evaluated and exemplified. The notation $|_a^n$ stands for testing with the $(a)$-test, while the data samples are obtained from the distribuions corresponding to the $(n)$-test. In all theoretical examples, the nominal distributions listed in Table~\ref{tab1} are considered. The notation $\mathcal{N}(\mu,\sigma^2)$ stands for the Gaussian distribution with mean $\mu$ and variance $\sigma^2$, whereas $\mathcal{L}(0,1)$ denotes the standard Laplace distribution with the respective parameters. The density functions are similarly denoted by $f_{\mathcal{N}}$ and $f_\mathcal{L}$, respectively. For solving all systems of equations damped Newton's method is used \cite{ralph}. In the following, the least favorable distributions, robust likelihood ratio functions, parameters of the equations, and (non)-convexity of $D_u$ are illustrated.
\begin{figure}[ttt]
  \centering
  \centerline{\includegraphics[width=8.8cm]{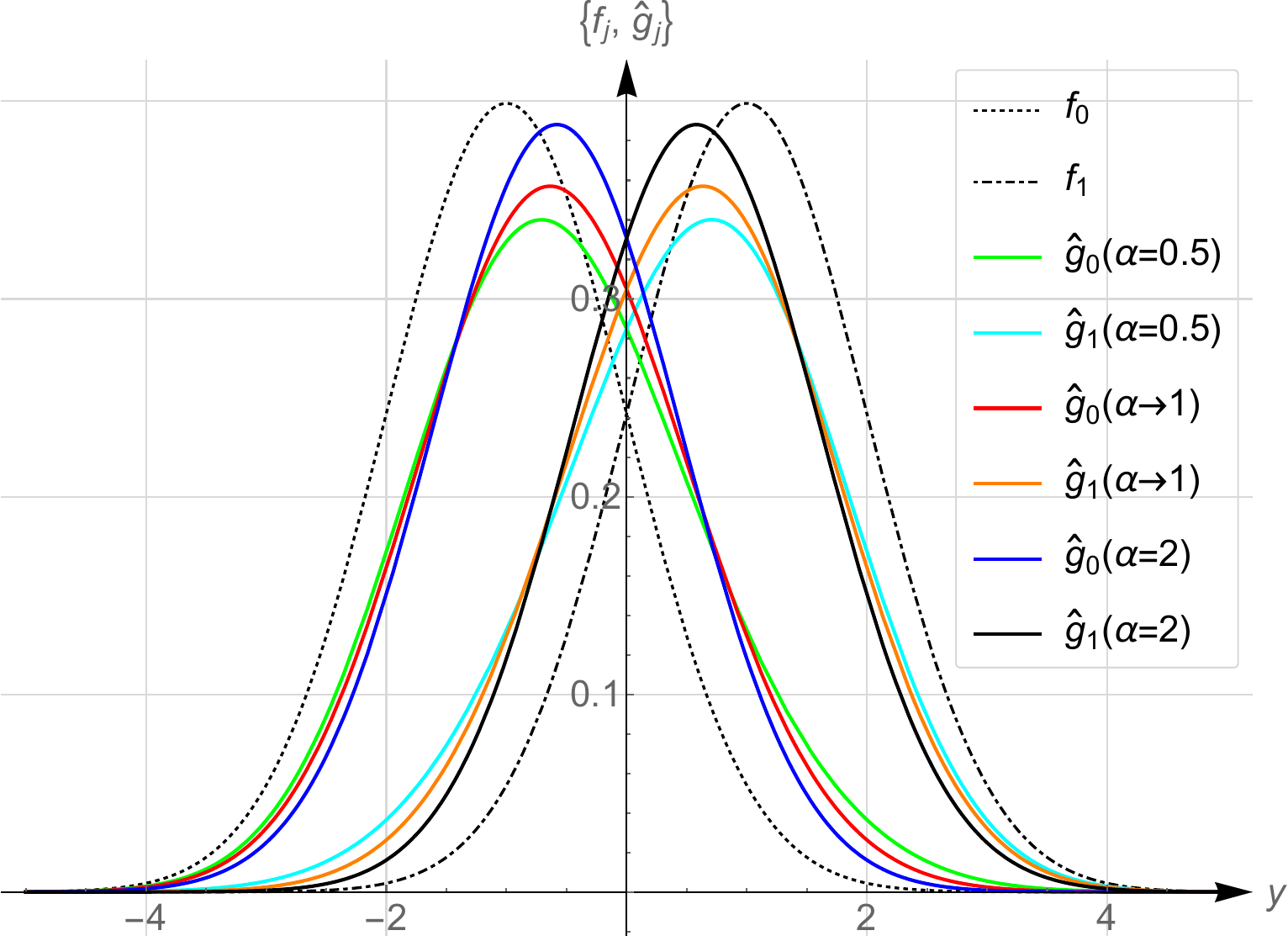}}
\caption{Nominal distributions abbreviated by $d_1$ in Table~\ref{tab1} and the corresponding LFDs with respect to their densities for $\epsilon_0=\epsilon_1=0.1$, where $u=0.5$ for all $\alpha$.\label{fig1}}
  \centering
  \centerline{\includegraphics[width=8.8cm]{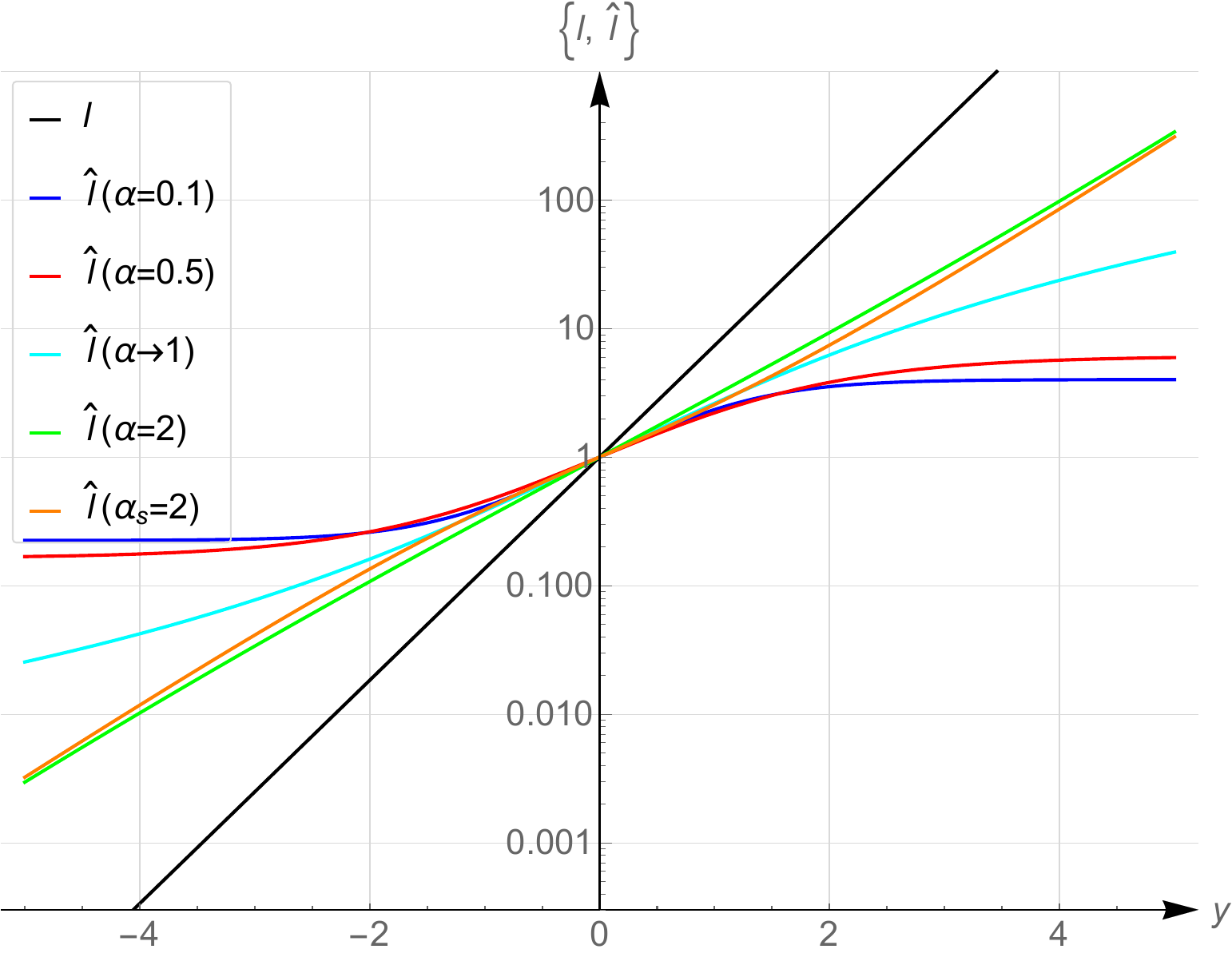}}
\caption{Robust and nominal LRFs corresponding to the nominal distributions abbreviated by $d_1$ in Table~\ref{tab1}, with $\epsilon_0=\epsilon_1=0.1$. The symmetric case $\alpha_s$ is also included.\label{fig2}}
\end{figure}
\subsection{LFDs and Robust LRFs}
The relation between the choice of the distance and its effect on robustness is investigated using the symmetric distribution pair abbreviated by $d_1$ in Table~\ref{tab1}, with robustness parameters $\epsilon_0=\epsilon_1=0.1$. For this setup, Figure~\ref{fig1} illustrates the LFDs together with the nominal distributions with respect to their densities for the KL-divergence ($\alpha\rightarrow 1$) as well as for various $\alpha$-divergences. Symmetrized $\alpha$-divergence is not included for the sake of clarity. There are two observations from this example:
\begin{enumerate}
\item The LFDs are non-Gaussian (not visible but verified by means of curve fitting).
\item The variance of the LFDs is inversely proportional to $\alpha$.
\end{enumerate}
\begin{figure}[ttt]
  \centering
  \centerline{\includegraphics[width=8.8cm]{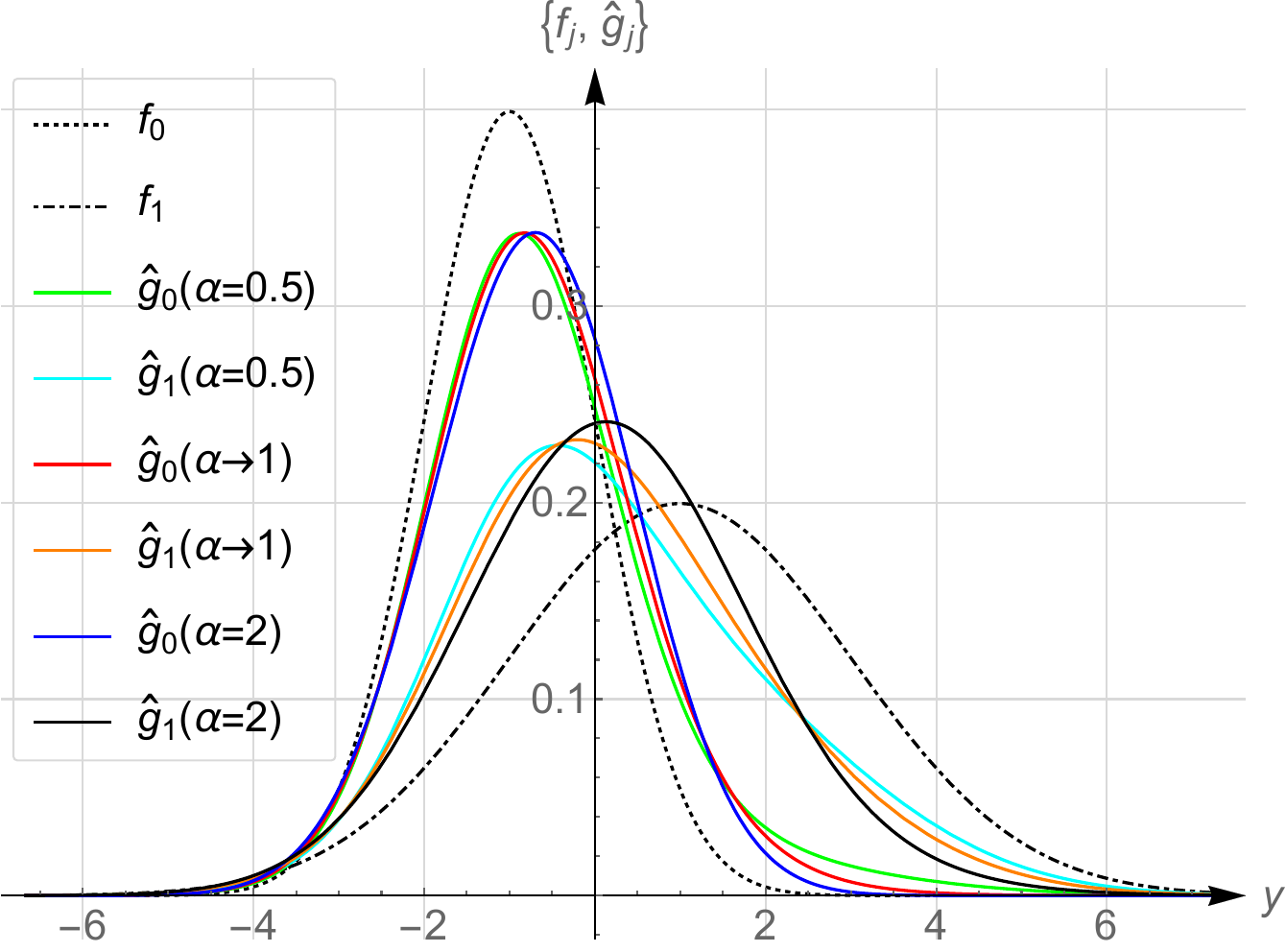}}
\caption{Nominal distributions abbreviated by $d_2$ in Table~\ref{tab1} and the corresponding LFDs for $\epsilon_0=\epsilon_1=0.1$.\label{fig3}}
  \centering
  \centerline{\includegraphics[width=8.8cm]{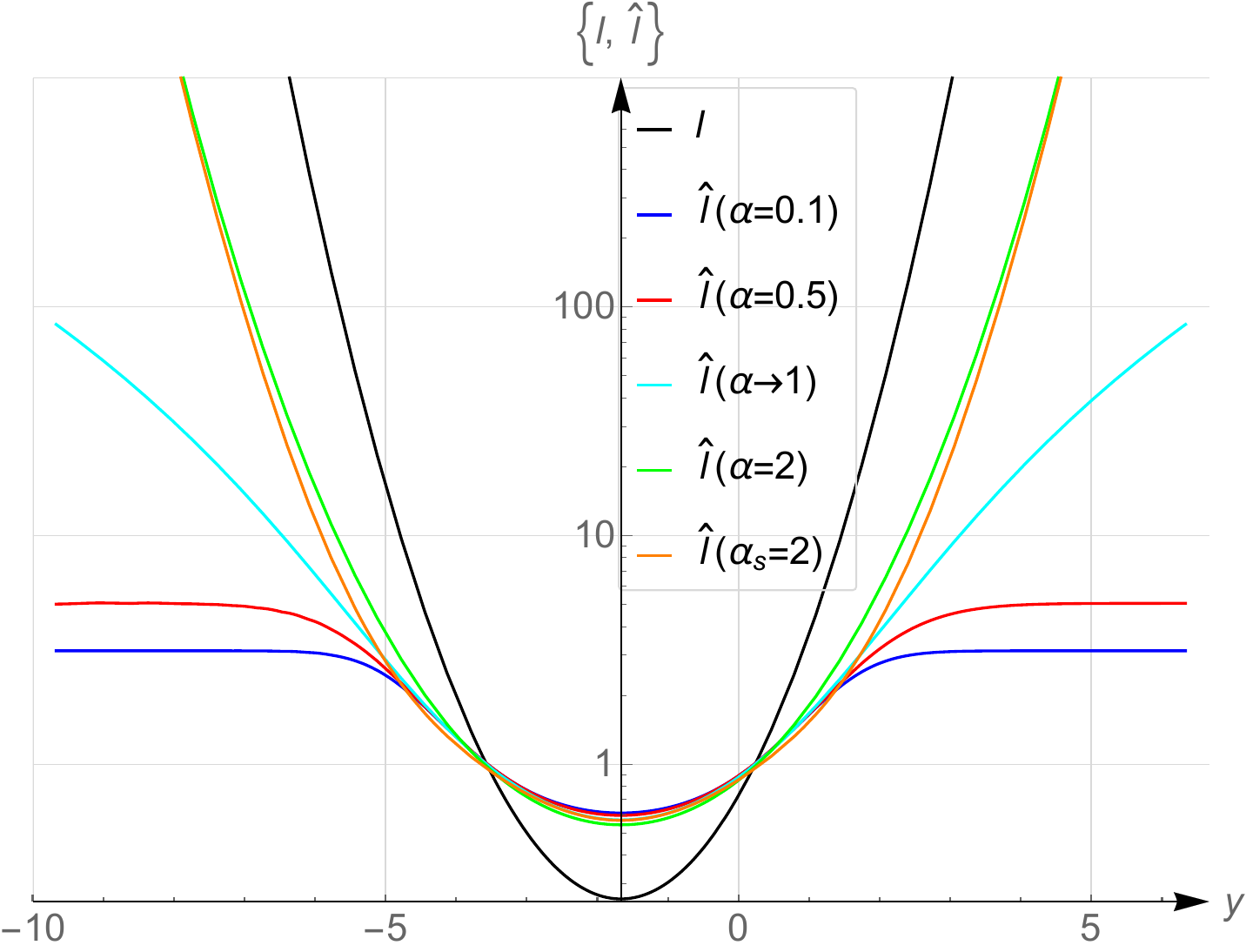}}
\caption{Robust and nominal LRFs found for the nominal distributions abbreviated by $d_2$ in Table~\ref{tab1} and $\epsilon_0=\epsilon_1=0.1$, including the symmetric case $\alpha_s$. The optimum values of $u$ are $0.95,0.67,0.56,0.59,0.61$, respectively, from $\alpha=0.1$ to $\alpha_s=2$.\label{fig4}}
\end{figure}
\noindent In Figure~\ref{fig2}, the corresponding likelihood ratio functions are illustrated, including the symmetrized version of the $\alpha$-divergence with $\alpha=2$, denoted by $\alpha_s=2$. The examples demonstrate that robustness is achieved by a clockwise rotation of the nominal LRFs followed by a clipping operation. Notably, the extent of rotation and clipping increases as $\alpha$ decreases. For smaller values of $\alpha$, the robust LRF resembles a smoothed variant of Huber’s clipped likelihood ratio test, where the nominal LRF is first rotated clockwise and subsequently clipped~\cite{hube65}. This particular behavior—yielding minimax robustness through such transformation—is, to the best of our knowledge, observed and reported for the first time in this work.\\
\noindent In the next example asymmetric pair of nominal distributions abbreviated by $d_2$ in Table~\ref{tab1} are considered. Figure~\ref{fig3} illustrates the LFDs together with the nominal distributions with respect to their densities, whereas Figure~\ref{fig4} shows the corresponding robust LRFs for $\epsilon_0=\epsilon_1=0.1$. One can make similar conclusions as in the previous example.\\
\noindent In Figures~\ref{fig2} and~\ref{fig4} the nominal LRFs are either increasing, or first decreasing and then increasing. It is possible to construct an example for which the nominal LRF is repeatedly increasing and decreasing. This case both confirms the solvability of the related non-linear equations and serves as an example for the convexity analysis in the next section. Let the nominal distributions be abbreviated by $d_3$ in Table~\ref{tab1}. Furthermore, let $\epsilon_0=\epsilon_1=0.05$, as the nominal distributions are now closer to each other. For this setup, Figure~\ref{fig5} and Figure~\ref{fig6} illustrate the LFDs together with the nominals with respect to their densities and the robust LRFs, respectively, for the KL-divergence neighborhood. Similar to the previous examples, the nominal LRFs which are smaller than $1$ are amplified and those larger than $1$ are attenuated. This implies that LFDs come closer to each other (w.r.t. the KL-divergence) in comparison to the nominal distributions.
\begin{figure}[ttt]
  \centering
  \centerline{\includegraphics[width=8.8cm]{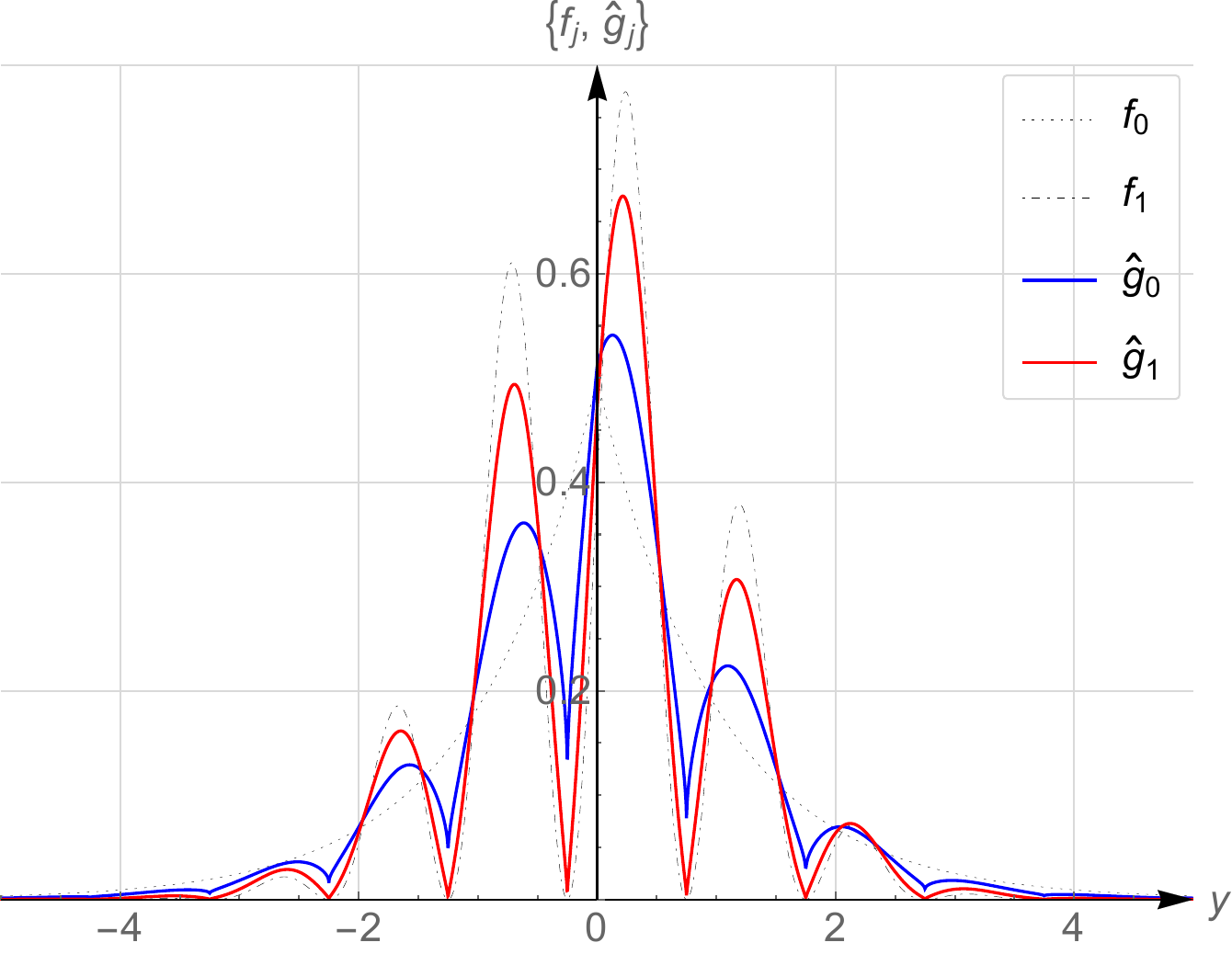}}
\caption{Nominal distributions abbreviated by $d_3$ in Table~\ref{tab1} and the corresponding LFDs with respect to their densities for $\epsilon_0=\epsilon_1=0.05$, where $u=0.46$.\label{fig5}}
  \centering
  \centerline{\includegraphics[width=8.8cm]{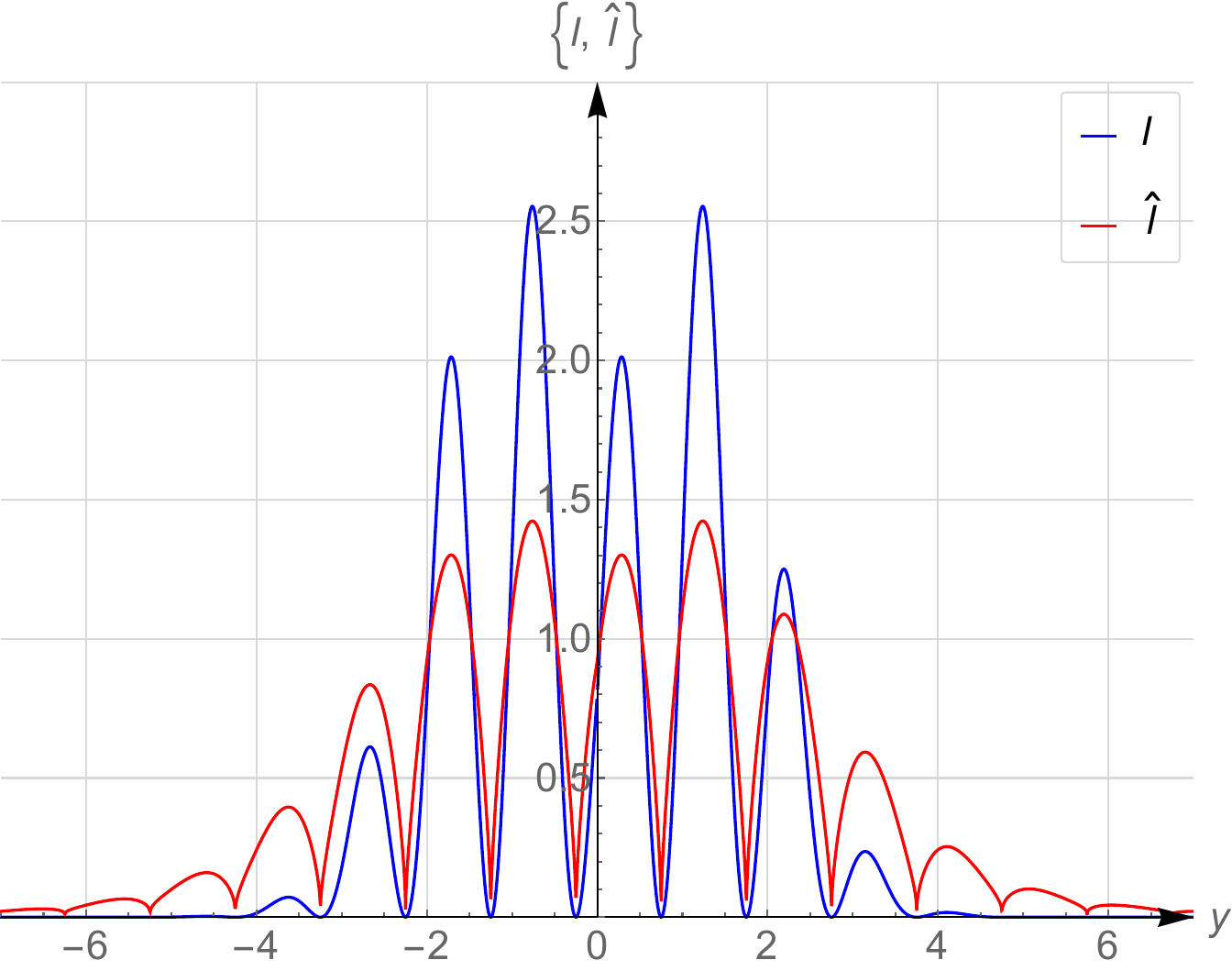}}
\caption{Robust and nominal LRFs found for the nominal distributions abbreviated by $d_3$ in Table~\ref{tab1} and $\epsilon_0=\epsilon_1=0.05$.\label{fig6}}
\end{figure}

\subsection{Convexity of $D_u$ and the Lagrangian Parameters}
As discussed in Remark~\ref{rem41}, the function $D_u$ is convex in $u$ for any fixed pair of known distributions. However, this convexity may not hold in general when the distributions themselves depend on $u$, such as in the case of least favorable distributions. In this section, we further investigate this phenomenon empirically and evaluate its implications for the minimax optimization strategy adopted in this work.\\
The primary goal here is twofold: first, to examine whether the convexity (or lack thereof) of $D_u$ can be exploited analytically or numerically during the optimization process; second, to visualize and verify the optimal Lagrangian parameters associated with the previously computed LFDs for reproducibility and transparency.\\
To this end, we proceed as follows: First, the system of nonlinear equations defined in Section~\ref{sec5} is solved for three different pairs of nominal distributions—abbreviated by $d_1$, $d_2$, and $d_3$ in Table~\ref{tab1}—under the KL-divergence neighborhood, and additionally for $d_2$ under the $\alpha$-divergence with $\alpha=0.1$, in order to obtain the corresponding LFDs. Then, for each value of $u$, the resulting LFDs are used to evaluate $D_u$, which is subsequently plotted in Figure~\ref{fig7}. These results demonstrate that $D_u$ is not necessarily convex in $u$ when the LFDs vary with $u$, thereby supporting the observation made in Remark~\ref{rem41}.\\
Furthermore, for each $u$, the optimal values of the KKT multipliers—$\lambda_0$, $\lambda_1$, $\mu_0$, and $\mu_1$—are illustrated in Figure~\ref{fig8} for the KL-divergence case with $\epsilon_0 = \epsilon_1 = 0.1$, and nominal distributions $d_1$ and $d_2$. These plots confirm that the LFDs and corresponding LRFs can be consistently reconstructed from the Lagrangian parameters, highlighting the reproducibility of the numerical procedure.\\
Beyond verification, this analysis also provides a practical justification for the sequential structure of the adopted minimax strategy—namely, performing the maximization step first (to obtain the LFDs for each $u$), followed by minimization over $u$. The observed non-convexity of $D_u$ when the LFDs depend on $u$ would make it analytically impractical to reverse this order and attempt minimization before the least favorable densities are determined.
\begin{figure}[ttt]
  \centering
  \centerline{\includegraphics[width=8.8cm]{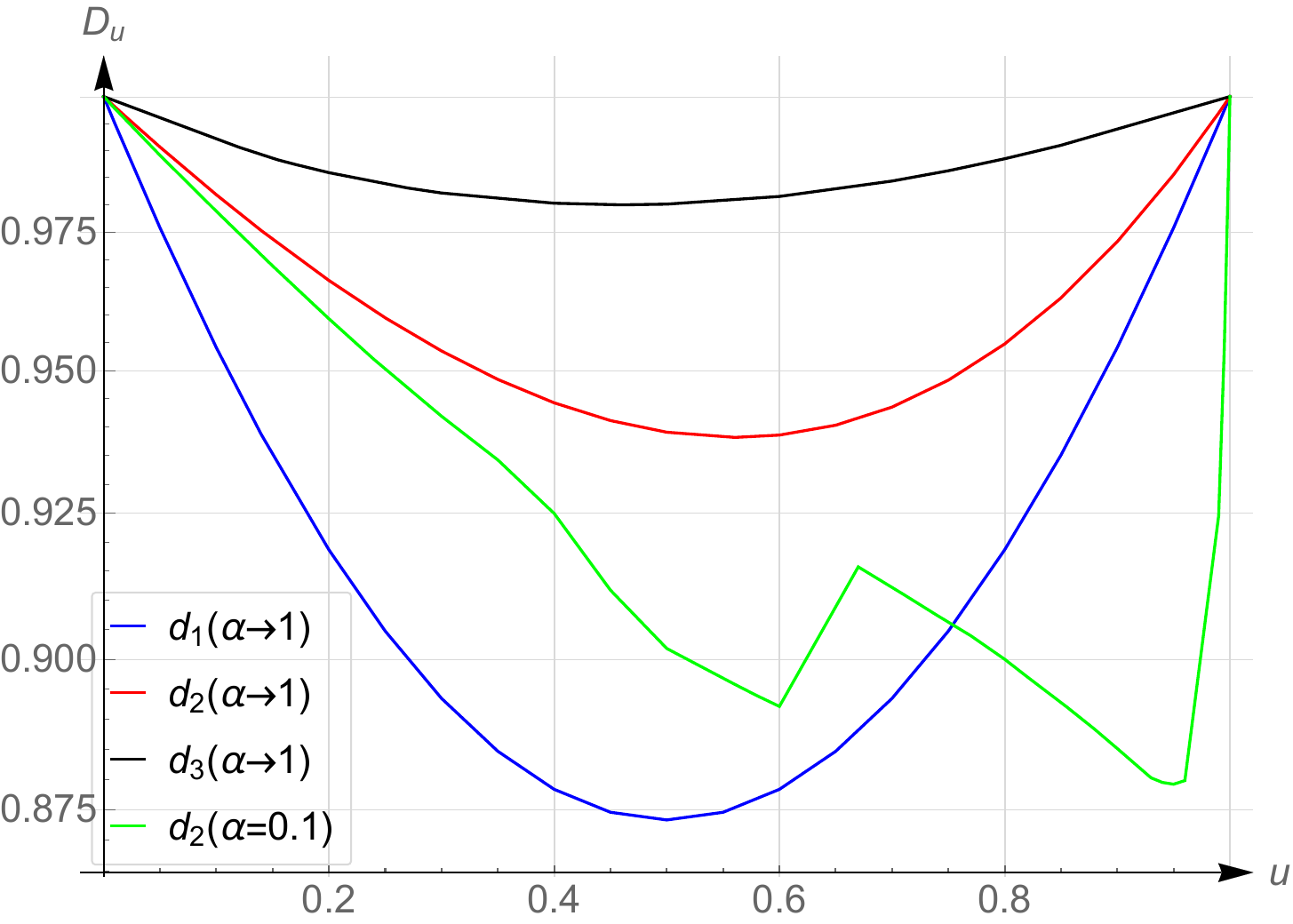}}
\caption{$u$-affinity as a function of $u$ for the LFDs obtained for various pairs of distributions as well as uncertainty classes.\label{fig7}}
  \centering
  \centerline{\includegraphics[width=8.8cm]{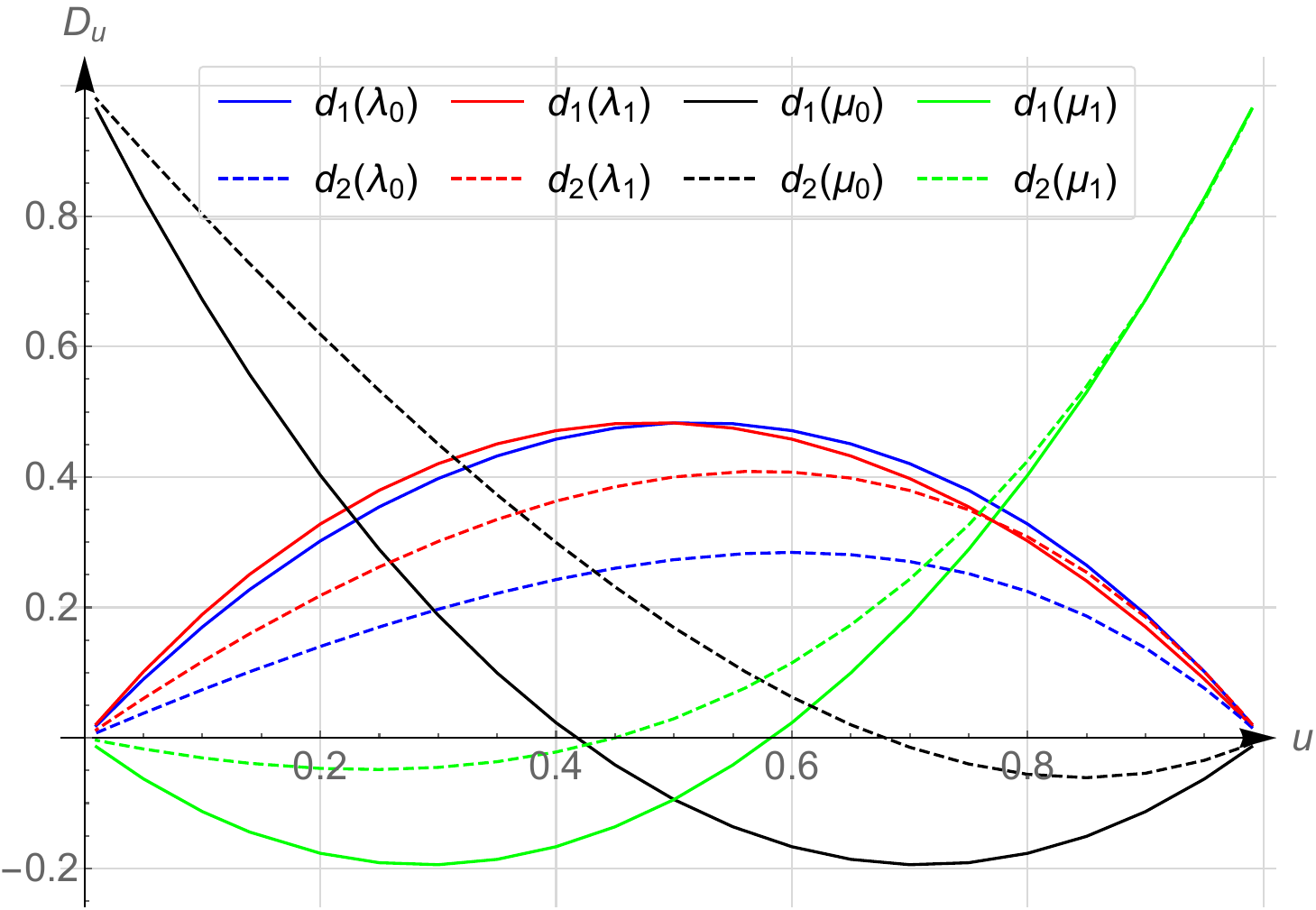}}
\caption{KKT parameters for two pairs of nominal distributions and the KL-divergence neighborhood with $\epsilon_0=\epsilon_1=0.1$.\label{fig8}}
\end{figure}

\begin{figure}[ttt]
  \centering
  \centerline{\includegraphics[width=8.8cm]{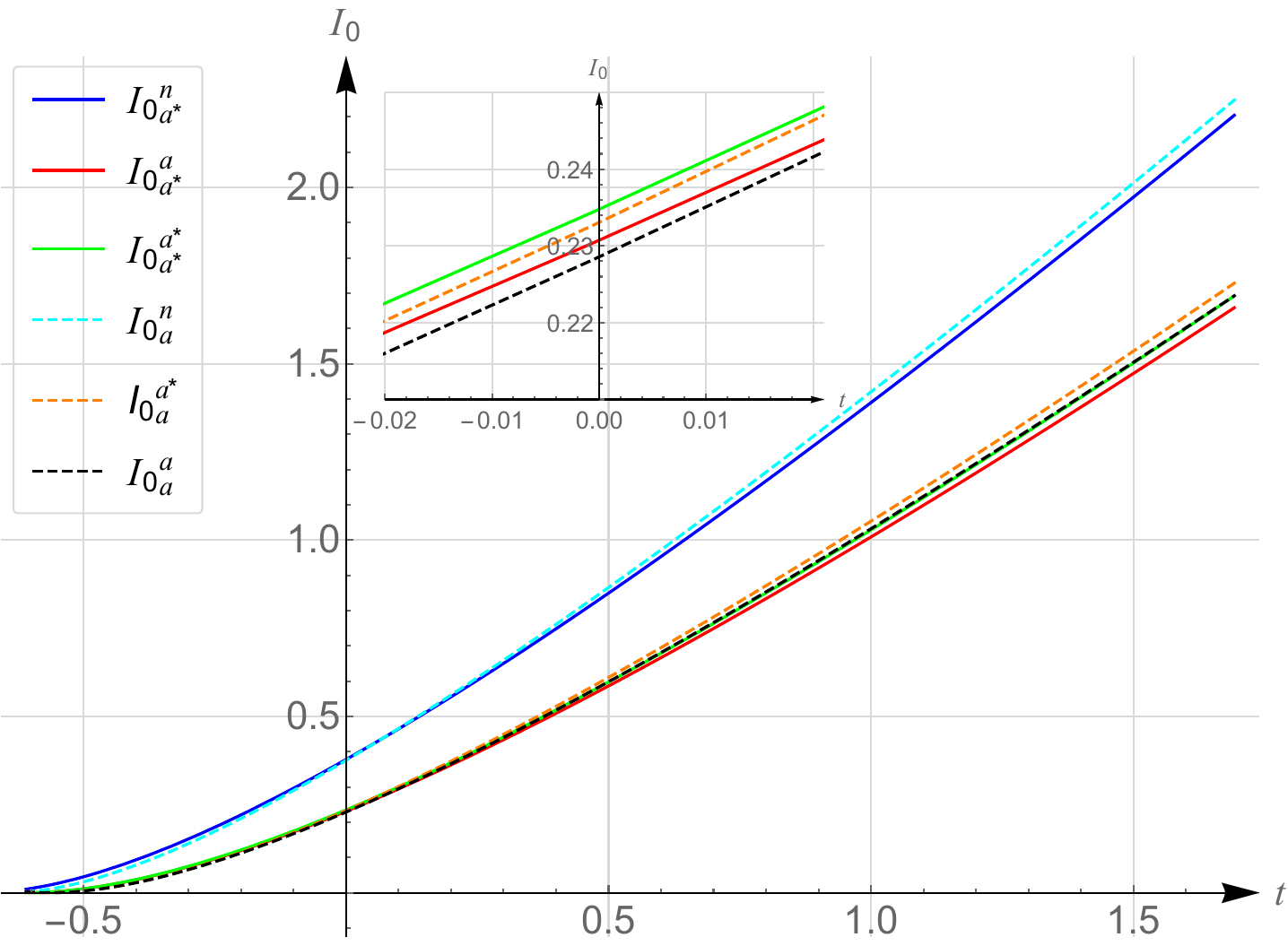}}
\caption{Asymptotic decay rate of the false alarm probability $I_0$ for the asymptotically minimax robust test ((a)-test) and Dabak's test ((a$^*$)-test).\label{fig15}}
  \centering
  \centerline{\includegraphics[width=8.8cm]{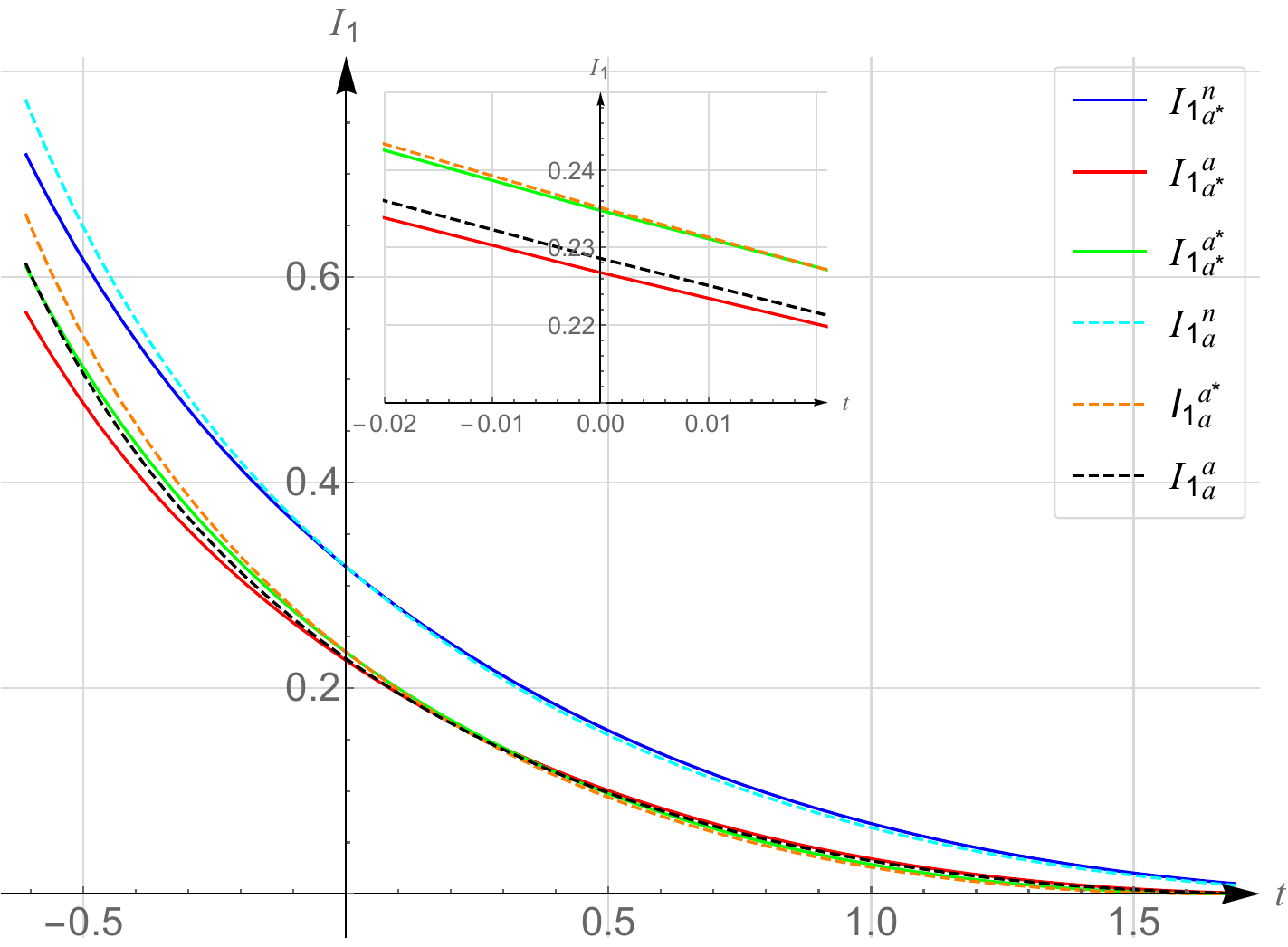}}
\caption{Asymptotic decay rate of the miss detection probability $I_1$ for the asymptotically minimax robust test ((a)-test) and Dabak's test ((a$^*$)-test).\label{fig16}}
\end{figure}
\subsection{Asymptotically Minimax Robust NP-tests}
This section presents numerical results for two tests: the Bayesian $(a)$-test, which is shown to be asymptotically minimax robust, and Dabak’s $(a^*)$-test, which is demonstrated to lack such robustness within the Neyman--Pearson (NP) framework. Data samples drawn from the distributions of the nominal test—referred to as the $(n)$-test for notational consistency—are also included to illustrate performance under distributional mismatch. These results serve to support and illustrate the theoretical robustness analysis conducted in earlier sections.\\
To illustrate this, the nominal distributions specified by row~1 in Table~\ref{tab1} are used, with uncertainty levels $\epsilon_0 = \epsilon_1 = 0.01$. Figures~\ref{fig15} and~\ref{fig16} depict the rate functions $I_0$ and $I_1$, respectively, as functions of the decision threshold $t$. For the ($a^*$)-test, only the thresholds $t = t_0 \approx -0.61$ and $t = t_1 \approx 1.69$ are relevant, as NP Type-I and Type-II tests are constructed using these thresholds (cf. Section~\ref{sec6_np}).\\
In Figure~\ref{fig15}, it is observed that $I_0|_{a^*}^{a^*}$ is not the smallest among the rate functions at $t = t_1$. Similarly, in Figure~\ref{fig16}, $I_1|_{a^*}^{a^*}$ is not the smallest at $t = t_0$. The lowest decay rates are obtained when the data is generated from the LFDs corresponding to the ($a$)-test. These results confirm that Dabak’s test is not asymptotically minimax robust in the NP formulation.\\
Since the selected setting permits the evaluation of additional thresholds, the behavior of the ($a$)-test under mismatched LFDs is also examined at $t = 0$. In the zoomed-in regions of Figures~\ref{fig15} and~\ref{fig16}, the black dashed lines correspond to the scenario where both the test and the data are generated from the LFDs of the ($a$)-test. These lines consistently exhibit the smallest error exponents. In contrast, the remaining dashed lines—corresponding to various mismatch cases—yield larger decay rates. This behavior indicates that the performance of the ($a$)-test does not degrade under mismatch, in line with its minimax robustness properties.\\

\section{Conclusion}\label{sec9}
This work has developed a unified framework for constructing asymptotically minimax robust tests under distributional uncertainty. The central optimization problem was formulated as a saddle-point problem involving the $u$-affinity, where the $u$-affinity is maximized over pairs of distributions $(G_0, G_1)$ from predefined uncertainty classes, and minimized over $u \in (0,1)$. This formulation enabled the derivation of robust tests applicable to both Bayesian and Neyman--Pearson formulations within a single theoretical structure.\\
Several $f$-divergence-based uncertainty classes were considered, including neighborhoods defined by the KL-divergence, the $\alpha$-divergence, and its symmetrized form. For these, least favorable densities were derived in closed parametric form using Karush-Kuhn-Tucker conditions, leading to tractable expressions involving, in some cases, nonlinear functions such as the Lambert-W function. The resulting robust likelihood ratio functions were shown to be nonlinear transformations—specifically, rotated and clipped versions—of their nominal counterparts, particularly under $\alpha$-divergence neighborhoods.\\
Numerical simulations supported the theoretical findings, validating the robustness of the proposed Bayesian ($a$)-test and confirming that Dabak’s ($a^*$)-test is not asymptotically minimax robust in the NP setting. In addition to verifying robustness guarantees, the simulations also provided practical insights into the behavior of the robust LRFs and the Lagrangian parameters. All results are consistent with the theoretical predictions and offer reproducible evidence of the test's minimax optimality.

\appendices\label{appx}

\section{Proof of Theorem~\ref{theorem01}}\label{appendix3}
Let us consider the error probability given by \eqref{eq2} for the minimax test \eqref{eqgfg}, but for simplicity in terms of only $n$ and $t$ as
\begin{equation*}
P_E(n,t)=\pi_0P_F(n,t)+(1-\pi_0)P_M(n,t).
\end{equation*}
From \eqref{eq27} and \eqref{eq275} one can write
\begin{equation*}
P_E(n,t)\approx C_F(n)\pi_0\exp\left(-nI_0(t)\right)+C_M(n)(1-\pi_0)\exp\left(-nI_1(t)\right),
\end{equation*}
where $C_F$ and $C_M$ satisfy
\begin{equation}\label{eq32}
\lim_{n\rightarrow\infty}\frac{1}{n}\log C_F(n)=\lim_{n\rightarrow\infty}\frac{1}{n}\log C_M(n)=0.
\end{equation}
Hence, the exponential decay rate of the error probability is governed by $I_0$ and $I_1$. For any $t$, for which $I_1(t)>I_0(t)$ we have
\begin{equation}
P_E(n,t)=\pi_0C_F(n)\exp(-nI_0(t))\,\,\text{as}\,\,n\to\infty,
\end{equation}
since
\begin{align}
\frac{P_E(n,t)}{\pi_0C_F(n)\exp(-nI_0(t))}&=\frac{\pi_0C_F(n)\exp(-nI_0(t))+(1-\pi_0)C_M(n)\exp(-nI_1(t))}{\pi_0C_F(n)\exp(-nI_0(t))}\nonumber\\
&=1+C(n)\exp(-n(I_1(t)-I_0(t)))\to 1,
\end{align}
where
\begin{equation}
C(n)=\frac{(1-\pi_0)C_M(n)}{\pi_0C_F(n)}.
\end{equation}
This argument is true because $I_1(t)-I_0(t)>0$ and $C(n)$ is sub-exponential since from \eqref{eq32} we have
\begin{equation}
\frac{1}{n}(\log((1-\pi_0)C_M(n))-\log(\pi_0C_F(n)))=0\,\,\text{as}\,\,n\to\infty.
\end{equation}
Similarly, for the case $I_1(t)<I_0(t)$ we have
\begin{equation}
P_E(n,t)=(1-\pi_0)C_M(n)\exp(-nI_1(t))\,\,\text{as}\,\,n\to\infty.
\end{equation}
Consequently, as $n\to\infty$ we have
\begin{equation}
P_E(n,t) = \begin{cases} \pi_0C_F(n)\exp(-nI_0(t)), & I_1(t)>I_0(t)  \\ (1-\pi_0)C_M(n)\exp(-nI_1(t)), & I_0(t)>I_1(t) \end{cases},
\end{equation}
which can be rewritten as
\begin{equation}
P_E(n,t) = \pi_0^a(1-\pi_0)^{1-a}C_F(n)^aC_M(n)^{1-a}\exp(-n\min\{I_0(t),I_1(t)\})
\end{equation}
where $a=\mathbf{1}_{\{I_1>I_0\}}$. Hence, as $n\to\infty$
\begin{align}
\min_t P_E(n,t) &\equiv \min_t \exp(-n\min\{I_0(t),I_1(t)\})\equiv \min_t -n\min\{I_0(t),I_1(t)\}\nonumber\\
&\equiv\max_t \min\{I_0(t),I_1(t)\}\equiv\min_t \max\{I_0(t),I_1(t)\},
\end{align}
since $\pi_0^a(1-\pi_0)^{1-a}C_F(n)^aC_M(n)^{1-a}$ is positive and independent of $t$. From \cite[Remark. 5.2.2.]{gulbook}, $I_0$ and $I_1$ are increasing and decreasing functions of $u$, respectively. Let $h_j:u\mapsto t$ be the mapping between maximizing $u$ and $t$ in \eqref{eq28}. It is easy to see that $h_j$ is increasing because it is the derivative of a convex function $\log M_{X_1}^j(u)$ \cite[p. 77]{levy}. Hence, $I_0(t)=I_0(h_0(u))$ and $I_1(t)=I_1(h_1(u))$ are also increasing and decreasing functions respectively, as
\begin{align*}
\frac{d I_0(h_0(u))}{d u}&=I^{'}_0(h_0(u))h_0^{'}(u)\geq 0, \\
\frac{d I_1(h_1(u))}{d u}&=I^{'}_1(h_1(u))h_1^{'}(u)\leq 0.
\end{align*}
Since $I_0$ and $I_1$ are also increasing and decreasing functions of $t$, and furthermore, as $M_{X_1}^1(u)=M_{X_1}^0(u+1)$ for \text{$G_j:=\hat{G}_j$} together with \eqref{eq28} implies \text{$I_1(t)=I_0(t)-t$}, it is true that \text{$I_0(0)=I_1(0)$} and together with \text{$\{t:I_1>I_0\}\equiv\{t:t<0\}$} and \text{$\{t:I_1<I_0\}\equiv\{t:t>0\}$} one can write
\begin{equation}\label{eq35}
I_m(t)=\min\{I_0(t),I_1(t)\}=
\begin{cases}
I_0(t), &  t<0 \\
I_1(t), &  t>0 \\
I_0(0)=I_1(0), &  t=0
\end{cases}.
\end{equation}
Hence, we have
\begin{equation}\label{eq36}
\arg\sup_t I_m(t)=0.
\end{equation}
Notice that we need $G_j:=\hat{G}_j$ in Theorem~\ref{theorem0}. Else, \eqref{eq35} and \eqref{eq36} do not necessarily hold.

\section{Existence of a Saddle Value for \eqref{eq45}}\label{appendix2}
\begin{thm}[Application of Sion's minimax theorem \cite{sion}]
A solution to \eqref{eq45} exists if the following conditions hold:
\begin{itemize}
\item The objective function $D_u$ is real valued, upper semi-continuous and quasi-concave on ${\mathscr{G}}_0\times{\mathscr{G}}_1$ for all $u\in[0,1]$.5
\item The objective function $D_u$ is lower semi-continuous and quasi-convex on $[0,1]$ for all $(G_0,G_1)\in{\mathscr{G}}_0\times{\mathscr{G}}_1$.
\item $[0,1]$ is a compact convex subset of a linear topological space.
\item ${\mathscr{G}}_0\times{\mathscr{G}}_1$ is a convex subset of a linear topological space.
\end{itemize}
\end{thm}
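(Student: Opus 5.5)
The plan is to check the hypotheses of Sion's minimax theorem one at a time, taking $X=\mathscr{G}_0\times\mathscr{G}_1$ as the maximizing variable and $Y=[0,1]$ as the minimizing variable, with payoff $(u,(G_0,G_1))\mapsto D_u(G_0,G_1)$. Sion's theorem needs one of the two sets to be compact; here that role is played by $[0,1]$. Once its hypotheses are met, it gives
\begin{equation*}
\min_{u\in[0,1]}\sup_{X}D_u=\sup_{X}\min_{u\in[0,1]}D_u .
\end{equation*}

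\textbf{Step 1 (the two sets).} Take $Y=[0,1]\subset\mathbb{R}$, which is compact and convex. Embed $\mathscr{G}_j$ in the linear space of finite signed measures dominated by $\mu$, identified with $L^1(\mu)$ densities. Convexity of $\mathscr{G}_j$ holds for all three divergence balls. Each divergence $D_{\mathrm{KL}}(\cdot,F_j)$, $D_\alpha(\cdot,F_j)$ and $D^s_\alpha(\cdot,F_j)$ is an $f$-divergence and hence convex in its first argument, so its sublevel sets are convex. A Cartesian product of convex sets is convex.

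\textbf{Step 2 (the variable $u$).} Fix $(G_0,G_1)$. Property~2 of $D_u$ gives continuity and convexity in $u$, so $D_u$ is lower semicontinuous and quasi-convex on $[0,1]$. Real-valuedness follows from Property~5, since $0\le D_u\le 1$ by Hölder's inequality.

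\textbf{Step 3 (the densities).} Fix $u\in[0,1]$. The map $(g_0,g_1)\mapsto g_1^u g_0^{1-u}$ is the perspective of a concave power, so it is jointly concave pointwise. Integrating preserves this, which gives joint concavity (Property~3) and hence quasi-concavity. For upper semicontinuity, work in the $L^1$ norm topology. If $g_j^{(n)}\to g_j$ in $L^1$, Hölder's inequality together with the elementary bound $|a^u-b^u|\le|a-b|^u$ for $a,b\ge0$ shows that $D_u(G_0^{(n)},G_1^{(n)})\to D_u(G_0,G_1)$. So $D_u$ is in fact continuous. The endpoints $u\in\{0,1\}$ are trivial, since there $D_u\equiv1$ on probability densities.

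\textbf{Step 4 (from $[0,1]$ to $(0,1)$).} Property~6 shows the inner minimizer lies in $(0,1)$ for every fixed pair. For the outer problem, $\sup_X D_u$ equals $1$ at the endpoints and is lower semicontinuous as a supremum of continuous functions. So its minimum over $[0,1]$ is attained, and it lies in the interior whenever the minimal value is $<1$, i.e.\ whenever the two classes do not overlap. Restricting to $(0,1)$ as in \eqref{eq45} therefore loses nothing.

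The main obstacle is the topological part of Step~3. One must make sure the chosen topology makes $\mathscr{G}_0\times\mathscr{G}_1$ a convex subset of a linear topological space. It must also ensure that $D_u$ is upper semicontinuous with respect to the same topology. Norm-$L^1$ continuity handles this, and Sion requires no compactness on $X$.

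A separate issue is that Sion yields equality of values, not attainment of the supremum. Attainment, which is what the saddle inequalities \eqref{eq46} need, would follow from weak compactness of the KL/$\alpha$-balls. These balls are uniformly integrable via de la Vallée-Poussin, since $x\log x$ and $x^\alpha$ with $\alpha>1$ are superlinear, and hence weakly compact in $L^1$ by Dunford--Pettis. One would combine this with weak upper semicontinuity of the concave, norm-continuous functional $D_u$. I would try this route first for the attainment claim.
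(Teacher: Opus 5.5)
Your proposal is correct and takes the same route as the paper, which verifies Sion's hypotheses by citing Properties~2 and~3 of $D_u$, applying Heine--Borel to $[0,1]$, and using convexity of the $f$-divergence balls. Your additions fill in details the paper leaves implicit: the explicit $L^1(\mu)$ topology with the H\"older estimate for continuity in $(g_0,g_1)$, the reduction from $[0,1]$ to $(0,1)$, and the remark that Sion gives equality of values but not the attainment needed for \eqref{eq46}. In that reduction, the case of minimal value $1$ is trivial because then $\sup_{\mathscr{G}_0\times\mathscr{G}_1}D_u\equiv 1$, so your ``classes do not overlap'' gloss is not needed.
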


\begin{IEEEproof}
The objective function is real valued, continuous in $u$ and $(g_0,g_1)$, jointly concave on ${\mathscr{G}}_0\times{\mathscr{G}}_1$ for all $u\in[0,1]$, and convex on $[0,1]$ for all $(G_0,G_1)\in{\mathscr{G}}_0\times{\mathscr{G}}_1$, see $2$nd and $3$rd properties of $D_u$. The set $[0,1]$ is trivially convex and is closed and bounded, hence compact with respect to the standard topology by Heine-Borel theorem \cite[Theorem 2.41]{rudin1976}. Finally, ${\mathscr{G}}_0$ and ${\mathscr{G}}_1$ are convex sets, since $D_f$ is a convex distance. As a result ${\mathscr{G}}_0\times {\mathscr{G}}_1$ is also convex.
\end{IEEEproof}

\bibliographystyle{IEEEtran}
\bibliography{strings4}
\end{document}